\newtheorem{theorem}{Theorem}[section]
\theoremstyle{definition}
\newtheorem{remark}[theorem]{Remark}
\title[Scalable optimal control for conservation laws]
      {Scalable optimal control for inequality-constrained discretizations of scalar conservation laws} %
      \author[Falko Ruppenthal, Denis Ridzal, Dmitri Kuzmin, Pavel Bochev]{}
\subjclass{Primary: 49M41; Secondary: 65K10,65M60.}
\keywords{Conservation laws, finite element methods, discrete maximum principles, mass conservation, optimal control.}
\thanks{$^*$Corresponding author: Falko Ruppenthal}
\newcommand\red[1]{#1}
\newcommand\blue[1]{#1}
\newcommand\magenta[1]{#1}
\newcommand\PBB[1]{#1}
\newcommand{\R}{\mathbb{R}}
\newcommand{\pd}[2]{\frac{\partial #1}{\partial #2}}
\newcommand{\td}[2]{\frac{\mathrm d #1}{\mathrm d #2}}
\newcommand{\beq}{\begin{equation}}
\newcommand{\eeq}{\end{equation}}
\newcommand {\dx} {\,{\rm d}{\mathbf x}}
\newcommand {\ds} {\,{\rm d}{\mathrm s}}
\DeclareMathOperator*{\minimize}{minimize}
\DeclareMathOperator*{\argmin}{arg\,min}
\begin{document}
\maketitle

\centerline{\scshape
  Falko Ruppenthal$^{{\href{mailto:falko.ruppenthal@math.tu-dortmund.de}{\textrm{\Letter}}}*1}$, Denis Ridzal$^{{\href{mailto:dridzal@sandia.gov}{\textrm{\Letter}}}2}$,}

\centerline{\scshape
  Dmitri Kuzmin$^{{\href{mailto:kuzmin@math.uni-dortmund.de}{\textrm{\Letter}}}1}$, Pavel Bochev$^{{\href{mailto:pbboche@sandia.gov}{\textrm{\Letter}}}2}$}

\medskip

{\footnotesize
 \centerline{$^1$Institute of Applied Mathematics (LS III), TU Dortmund University, Germany}
} %

\medskip

{\footnotesize
  \centerline{$^2$Center for Computing Research, Sandia National Laboratories, Albuquerque, NM, USA}
}

\bigskip

\begin{abstract}
Optimization-based (OB) alternatives to traditional flux limiters couch preservation of properties such as local bounds and maximum principles into optimization problems, which impose these properties through inequality constraints. In this paper, we propose a new `potential-target' OB approach that enforces these properties using an optimal control formulation, in which the control is the source term expressed through flux potentials. The resulting OB formulation combines superb accuracy with excellent local conservation properties, but complicates the development of scalable iterative solvers, which is greatly influenced by the choice of semi-norms for the objective function. We use this fact to design scalable iterative solvers based on matrix-free trust-region Newton methods with projections onto convex sets. These solvers leverage inexpensive multigrid V-cycles while satisfying all constraints to machine precision. Numerical experiments reveal that the convergence behavior of the solvers can be greatly improved by a simple scaling of the inequality constraints. We demonstrate excellent performance in applications to linear test problems, such as $L^2$ projection and solid body rotation, and to the Cahn–Hilliard equation.
  
\end{abstract}

\section{Introduction}
Modern numerical schemes for systems of conservation laws are frequently equipped with mechanisms for enforcing discrete maximum principles and/or positivity of scalar quantities of interest. A variety of flux and slope limiting techniques were developed for this purpose during the last 50 years; see the book \cite{kuzmin-hajduk2022} for \PBB{a comprehensive} review of existing approaches. Limiters are commonly used to constrain sums of `massless' quantities, e.g., numerical fluxes, deviations from cell averages, and element vectors with zero sums, which represent the differences between consistent discretizations of high and low order. Any provable property of the latter can be preserved in the process of limiting at the expense of introducing some numerical diffusion.
As shown by Bochev~et~al.~\cite{bochev2012constrained} in the context of flux limiting and by Aizinger~\cite{aizinger2011} in the context of slope limiting, the aforementioned approaches can be interpreted as exact solvers for optimization problems with simple box constraints. 
\PBB{One can show \cite{peterson2024optimization} that the latter can be viewed as simplifications of the original optimization constraints, based on worst-case considerations.

  Although sufficient for the preservation of the desired bounds, the simplified box constraints ``shrink'' the feasible set of the optimization problem, potentially excluding the optimal solution; see, e.g., the ``torture'' test in \cite{Bochev_11_JCP}. For this reason, recently there has been an} increased interest in the finite element community in
optimization-based (OB) alternatives to traditional limiters \cite{evans2009,liu2023,may2013,van2019}. 
OB methods considered in the present paper trace their origins to the work of Bochev~et~al.~\cite{bochev2012constrained,bochev2020optimization}. Exploiting a close relationship to algebraic flux correction schemes, they provide a unified framework for incorporating inequality constraints into finite element discretizations and conservative data projections. 
\PBB{In this paper, we exploit this framework to develop a new \emph{potential-target} OB formulation that draws upon the ideas of  Ruppenthal and Kuzmin~\cite{ruppenthal2023optimal}.} 
\PBB{The approach in \cite{ruppenthal2023optimal}} is closely related to the elliptic optimal control problems considered by Meyer and Rösch \cite{meyer2004}. To maintain the local mass conservation property while enforcing
the imposed box constraints, a source term control is expressed in terms of numerical fluxes. Each flux is proportional
to a difference of two scalar \emph{flux potentials} (FP), which are treated as optimization variables. The objective function
is defined using potentials that correspond to a standard Galerkin finite element discretization. The corresponding
\emph{target fluxes} remain unchanged if the pointwise box constraints are satisfied for a given pair of nodes. In this way, the action of optimal control is localized to small subdomains in which the unconstrained baseline
scheme would produce undershoots/overshoots. 
\PBB{This is an important improvement over the previous ``density-target'' OB formulations (OBFEM-DD) \cite{bochev2013fast,peterson2024optimization} that may experience some ``mass-spreading'', while the number of discrete unknowns in our new formulation} is less than 
in the locally conservative flux-target OB formulation \cite{bochev2020optimization} \PBB{that does not have this issue.} 
Potential disadvantages of FP-based optimization procedures include significant computational overhead and lack of scalability. This issue is addressed in the present paper, in which we find the cause of efficiency bottlenecks and propose a scalable revised version of the OB-FP method introduced in~\cite{ruppenthal2023optimal}.

While OB procedures are generally more costly than closed-form limiters, \PBB{they yield globally optimal states that are the best possible solutions with respect to the optimization objective and are guaranteed to possess the desired physical properties. In addition,} they can be used to enforce relevant constraints even if the exact solution violates them because of modeling errors. For example, a physical upper bound can be enforced on volume fractions~\cite{ruppenthal2023optimal}. Similarly, approximate solutions to the Cahn-Hilliard equation can be constrained to stay in an admissible range~\cite{liu2023}. We remark that the \emph{physics-aware} limiting algorithms proposed in~\cite{frank2020bound,kuzmin2012} and~\cite[Sec. 4.8]{kuzmin-hajduk2022} can also \PBB{be} interpreted as iterative solvers for optimization problems. Our new \PBB{\emph{potential-target}} OB algorithm \PBB{provides} more efficient customized solvers, \PBB{which further strengthen the case for optimization-based approaches.}

The remainder of the paper is structured as follows. We present the baseline discretization and formulate a general optimal control problem in Sections~\ref{sec:baseline} and~\ref{sec:obfem}, respectively. The proposed revision of the potential-target OB method \PBB{into the new potential-target OB formulation}
is \PBB{described} in Section~\ref{sec:scalable}. In Section~\ref{sec:solver}, we provide relevant implementation details and discuss ways to \PBB{solve the reformulated optimization problems in a computationally efficient manner. Section~\ref{sec:examples} provides numerical examples illustrating the accuracy and scalability of the new algorithms, while  Section~\ref{sec:conclusions} offers some concluding remarks.}

\section{Baseline finite element discretizations}
\label{sec:baseline}

\PBB{In what follows,} $\mathbf{x}=(x_1,\ldots,x_d)^\top$ \PBB{is a vector in $\R^d$,  $d\in\{1,2,3\}$}, and  $t\ge 0$ a time instant. We are interested in simulating the evolution of a scalar conserved
quantity $u(\mathbf{x},t)$ in a domain $\Omega\subset\R^d$ with boundary $\partial\Omega$.
We discretize the \PBB{equations} governing the dynamics of this quantity 
using the continuous Galerkin method \PBB{on a conforming partition $\mathcal T_h$ of $\Omega$ into finite elements $K_i$, $i=1,\ldots,M$, i.e., $\mathcal T_h=\{K_1,\ldots,K_M\}$.}
\PBB{The elements $K_i$ can be $d+1$ simplices, quadrilaterals or hexahedrons. For simplicity, we restrict our attention to meshes comprising elements of the same type and to
linear ($\mathbb{P}_1$) or multilinear ($\mathbb{Q}_1$) finite
element approximations on $K\in\mathcal T_h$.} The corresponding Lagrange basis functions
$\varphi_1,\ldots,\varphi_N$ are associated with the vertices
$\mathbf{x}_1,\ldots,\mathbf{x}_N$
\PBB{of  $\mathcal T_h$. We recall that the basis functions possess the following partition of unity property
\begin{equation}\label{eq:PU}
\sum_{i=1}^{N}\varphi_i\equiv 1 \,.
\end{equation}
}
The coefficients \PBB{$u_j$, $j=1,\ldots,N$} of a finite element approximation\footnote{We overload the notation and denote by $u$ both the exact solution~$u(\mathbf{x},t)$ of the governing equation and the vector~$u\in \mathbb{R}^N$ of the discrete nodal values $u_1,\ldots,u_{N}$. The intended meaning should be clear from the context.}
$$
u_h=\sum_{j=1}^Nu_j\varphi_j
$$
are determined using a (stabilized) weak form of the governing equation.
\smallskip

In this work, we 
consider fully discrete problems that can be written as
\beq\label{galerkin}
M_Cu^{\rm new}=M_Cu^{\rm old}+\Delta t\,r(u^{\rm new},u^{\rm old}).
\eeq
For example, $u^{\rm old}\in\R^N$ and $u^{\rm new}\in\R^N$ may contain
the coefficients $u_j(t) $ of $u_h(\mathbf{x}_j,t)$ at  discrete
time levels $t^{\rm old}\ge 0$ and $t^{\rm new}=t^{\rm old}+\Delta t$, respectively. 

The coefficients  of the consistent mass matrix 
$M_C=(m_{ij})_{i,j=1}^{N}$ are given by
$$
m_{ij}=\PBB{\int_{\Omega}\varphi_i\varphi_j\dx} =  \sum_{e=1}^{M}\int_{K_e}\varphi_i\varphi_j\dx.
$$
If the components of $r(u^{\rm new},u^{\rm old})\in\R^{N}$ sum to zero, then
the partition of unity property \eqref{eq:PU} implies that
$$\sum_{e=1}^{M}\int_{K_e}u_h^{\rm new}\dx=
\sum_{e=1}^{M}\int_{K_e}u_h^{\rm old}\dx.
$$
This global conservation property is preserved if $M_C$ is approximated by the lumped mass matrix $M_L=(\delta_{ij}m_i)_{i,j=1}^{N}$ with positive diagonal entries $$m_i=\sum_{j=1}^{N}m_{ij}.$$
It is also preserved if we replace the \emph{baseline} discretization~\eqref{galerkin} with
 \beq\label{obfem}
 \red{M_L\tilde u^{\rm new}}=M_Lu^{\rm old}+\Delta t\,[r(u^{\rm new},u^{\rm old})
 + \left(M_L - M_C\right) \dot u],
 \eeq
 where \red{$u^{\rm new}$ is the solution of the linear system \eqref{galerkin}} and
 $\dot u$ is a given vector of \emph{flux potentials}
   (named so in \cite{ruppenthal2023optimal}) \red{that defines
  an approximation $\tilde u^{\rm new}$ to $u^{\rm new}$}. Note that systems
 \eqref{galerkin} and \eqref{obfem} are equivalent
 \red{and $\tilde u^{\rm new}=u^{\rm new}$}  for
 $\dot u=(u^{\rm new}-u^{\rm old})/\Delta t$.

\section{Formulation of optimal control problems}
\label{sec:obfem}

A numerical scheme of the form \eqref{obfem} is \emph{bound preserving}
with respect to~an admissible range $[u_i^{\min},u_i^{\max}]\subseteq [u^{\min},u^{\max}]$
if the updated nodal values satisfy 
\beq\label{lmp}
u^{\min}\le u_i^{\min}\le u_i^{\rm new} \le u_i^{\max}\le u^{\max}
\qquad \forall i\in\{1,\ldots,N\}.
\eeq
The choice of the global bounds $u^{\min}$ and $u^{\max}$ is typically
dictated by the physical nature of the problem at hand. For example,
volume and mass fractions must stay between $u^{\min}=0$ and
$u^{\max}=1$. The local bounds $u_i^{\min}$ and $u_i^{\max}$ can be
constructed using the given values of  $u_h^{\rm old}$ at node $i$ and its
nearest neighbors. The purpose of imposing \PBB{such} local discrete maximum principles is to
avoid spurious oscillations within the range $[u^{\min},u^{\max}]$ of
physically admissible values.

\PBB{Optimization-based (OB) methods couch the preservation of these bounds into a constrained optimization problem that minimizes a suitable objective function  $J = J(w)$, $J : \R^n \rightarrow \R$ subject to  constraints enforcing the local bounds. The argument $w$ of this function depends on the type of the OB formulation. In \emph{flux-based} OB methods \cite{Bochev_11_JCP,bochev2020optimization}, this variable represents the difference between a high-order target flux and the optimization state, while in \emph{density-based} methods \cite{peterson2024optimization,bochev2013fast} $w$ measures the mismatch of the state and a high-order nodal density approximation.}

\PBB{For the new potential-target OB formulation in this paper,} $w \in \R^n$ will be a vector of control variables, \PBB{and all optimization problems will have the following general structure:} 
\begin{subequations}
\label{obfem-first}
\begin{align}
\minimize_{w\in\R^{N}} &\;\; J(w) \label{obfem-obj} \\
\text{subject to} &\;\;
M_Lu=M_L\tilde u+\Delta t\,g(w),\quad
\mathbbm{1}^\top g(w)=0, \label{obfem-eq} \\
&\;\;
u_i^{\min}\le u_i\le u_i^{\max} \quad \forall i\in\{1,\ldots,N\}. \label{obfem-ineq}
\end{align}
\end{subequations}
Here $\mathbbm{1} \in \mathbb{R}^n$, $\mathbbm{1}=(1,\ldots,1)^\top$, and $[u_i^{\min},u_i^{\max}]\subseteq [u^{\min},u^{\max}]$.
The objective function in~\eqref{obfem-obj} is \PBB{defined as the regularized mismatch between  $w$ 
and} an optimization target $\tilde w$, i.e.,
\beq
J(w)=\frac12\|w-\tilde w\|_A^2+\frac{\mu}{2}\|w\|_B^2,\label{obfem-last}
\eeq
where $\mu\ge 0$ is a regularization parameter. The
(semi-)norms $\|\cdot\|_A$ and $\|\cdot\|_B$ are defined in terms of positive
(semi-)definite matrices $A,B\in\R^{N\times N}$, respectively, as follows:
$$\|w\|_A^2=w^\top Aw,\qquad \|w\|_B^2=w^\top Bw \qquad
\forall w\in\R^{N}.$$

\PBB{The OB-FP approach of  Ruppenthal and Kuzmin \cite{ruppenthal2023optimal}, which motivates this work,  uses $A=B=M_C$, $\mu=0.01$, equality constraints \eqref{obfem-eq} expressed through a vector of flux potentials $w=\dot u$, and}
$$
\tilde u=u^{\rm old}+\Delta t M_L^{-1}\,r(u^{\rm new},u^{\rm old}),
\qquad g(w)=(M_L - M_C) w.
$$
The choice
$w=(u^{\rm new}-u^{\rm old})/\Delta t$ corresponds to the
baseline scheme~\eqref{galerkin}. The \PBB{resulting OB-FP}
problem yields accurate and bound-preserving approximations, \PBB{yet,} the convergence behavior of iterative solvers for the system of discrete optimality conditions leaves
a lot to be desired. We clarify the reasons for this and
propose a revised formulation of the potential-based
optimal control method in the next section. Existing
relationships to other optimization-based approaches
and to algebraic flux correction schemes are discussed in~\cite{ruppenthal2023optimal}.

\begin{theorem}[Solvability \cite{ruppenthal2023optimal}]
  Let $\rho=\sum_{i=1}^{N}r_i(u^{\rm new},u^{\rm old})$
  and
  $$
    \omega_i=\begin{cases}
  u_i^{\max}-u_i^{\rm old} & \quad \mbox{if}\  \rho>0,\\
  0 & \quad \mbox{if}\  \rho=0,\\
  u_i^{\min}-u_i^{\rm old} & \quad \mbox{if}\  \rho<0.
  \end{cases}
    $$ 
  Assume that
  \beq\label{solvcond}
  \rho\Delta t\le \min_{1\le i\le N}m_i\left(\sum_{j=1}^{N}\omega_j\right).
  \eeq
Then the feasible set of the optimization problem~\eqref{obfem-first}
is nonempty.
\end{theorem}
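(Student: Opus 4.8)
The plan is to eliminate the control $w$ and recast feasibility of \eqref{obfem-first} as a purely algebraic question about the attainable total mass. First I would observe that the second equality in \eqref{obfem-eq} is in fact automatic: since $g(w)=(M_L-M_C)w$ and each row of $M_L-M_C$ sums to zero (the diagonal entry $m_i=\sum_j m_{ij}$ cancels the $i$-th row of $M_C$), we have $(M_L-M_C)\mathbbm{1}=0$, and by symmetry $\mathbbm{1}^\top(M_L-M_C)=0$, so $\mathbbm{1}^\top g(w)=0$ for every $w$. Multiplying the first equality in \eqref{obfem-eq} by $\mathbbm{1}^\top$ then shows that every feasible state obeys the exact mass balance $\sum_{i=1}^{N}m_i u_i=\mathbbm{1}^\top M_L\tilde u=\sum_{i=1}^{N}m_i u_i^{\rm old}+\Delta t\,\rho$, where the last step uses $M_L\tilde u=M_Lu^{\rm old}+\Delta t\,r$ together with $\rho=\mathbbm{1}^\top r$.

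Next I would argue that this mass balance is the \emph{only} obstruction coming from the dynamics. The matrix $M_L-M_C$ is symmetric, satisfies $(M_L-M_C)\mathbbm{1}=0$, and has nonnegative diagonal and nonpositive off-diagonal entries $-m_{ij}$ (the $\mathbb{P}_1$/$\mathbb{Q}_1$ basis functions are nonnegative, so $m_{ij}\ge0$); it is therefore the weighted graph Laplacian of the mesh connectivity graph. Since that graph is connected, $\ker(M_L-M_C)=\mathrm{span}\{\mathbbm{1}\}$ and hence $\mathrm{range}(M_L-M_C)=\mathbbm{1}^\perp$. Consequently, for any target state $\hat u$ a control with $M_L\hat u=M_L\tilde u+\Delta t\,(M_L-M_C)w$ exists if and only if $M_L(\hat u-\tilde u)\perp\mathbbm{1}$, i.e. if and only if $\hat u$ carries the same total mass as $\tilde u$. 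The feasibility question thus reduces to the following: does the box $\prod_i[u_i^{\min},u_i^{\max}]$ contain a point of prescribed mass $\sum_i m_i u_i^{\rm old}+\Delta t\,\rho$?

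To answer this affirmatively under \eqref{solvcond}, I would exhibit such a point explicitly. Assuming (as the usual construction of the local bounds guarantees) that $u^{\rm old}$ is itself bound preserving, take first $\rho>0$, so that $\omega_j=u_j^{\max}-u_j^{\rm old}\ge0$, and set $\hat u_j=u_j^{\rm old}+\beta\,\omega_j$ with $\beta=\Delta t\,\rho\big/\sum_{k}m_k\omega_k$. By construction $\hat u$ carries the prescribed mass, and it lies in the box precisely when $\beta\in[0,1]$, i.e. when $\Delta t\,\rho\le\sum_k m_k\omega_k$. The elementary estimate $m_k\ge\min_i m_i$ combined with $\omega_k\ge0$ yields $\sum_k m_k\omega_k\ge(\min_i m_i)\sum_k\omega_k$, so hypothesis \eqref{solvcond} is exactly what forces $\beta\le1$; this $\hat u$, paired with the control furnished by the previous paragraph, is feasible. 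The case $\rho<0$ is symmetric upon interchanging the roles of $u^{\max}$ and $u^{\min}$, and $\rho=0$ is immediate since $\hat u=u^{\rm old}$ then already works.

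The genuinely structural step is the reachability claim of the second paragraph; everything else is bookkeeping once it is in place. I expect the main effort to go into justifying $\mathrm{range}(M_L-M_C)=\mathbbm{1}^\perp$, which rests on the one-dimensionality of the kernel and hence on connectedness of the mesh: this is what lets one pass from ``a box point of the correct mass exists'' to ``a feasible control exists.'' The remaining subtlety is purely a matter of signs — the factor $\min_i m_i$ in \eqref{solvcond} is a deliberately conservative surrogate for the sharp mass-weighted threshold $\sum_k m_k\omega_k$, and one must track the direction of the resulting inequality separately in the $\rho>0$ and $\rho<0$ regimes.
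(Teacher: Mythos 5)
Your proof is correct and follows the same basic strategy as the paper's: reduce feasibility to exhibiting one state in the box $\prod_i[u_i^{\min},u_i^{\max}]$ carrying the prescribed total mass $\sum_i m_iu_i^{\rm old}+\Delta t\,\rho$, obtained by pushing $u^{\rm old}$ toward the active bound, and then recover a control by inverting $M_L-M_C$ on a zero-sum right-hand side. The execution differs in two instructive ways. First, the paper decomposes $\rho$ nodally as $r_i^B=\omega_i\rho/\sum_j\omega_j$ and takes $u_i^B=u_i^{\rm old}+\frac{c^B\Delta t}{m_i}\omega_i$ with $c^B=\rho/\sum_j\omega_j$, so the in-box requirement is $c^B\Delta t\le m_i$ for every $i$, i.e., exactly \eqref{solvcond}; your point $\hat u_j=u_j^{\rm old}+\beta\,\omega_j$ with the \emph{global} scalar $\beta=\Delta t\rho/\sum_k m_k\omega_k$ is a different vector (the two coincide only for constant $m_i$), and it lies in the box under the weaker condition $\Delta t\rho\le\sum_k m_k\omega_k$. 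Your variant therefore proves the theorem and additionally exposes \eqref{solvcond} as a worst-case surrogate for the sharp mass-weighted threshold, which you correctly point out. Second, you justify the reachability step (range of $M_L-M_C$ equals $\mathbbm{1}^\perp$ via connectivity of the mesh graph), which the paper silently assumes when it writes ``let $\dot u^B$ be a solution of $g(\dot u^B)=r^B-r$''; this is consistent with the null-space discussion the paper defers to Section~\ref{sec:scalable}. Two caveats are shared by both arguments: the assumption that $u^{\rm old}$ itself satisfies the local bounds (you flag it, the paper uses it tacitly), and the treatment of $\rho<0$, where dividing \eqref{solvcond} by the then-negative quantity $\sum_j\omega_j$ reverses the inequality, so the literal condition does not directly yield $\beta\le 1$ (resp.\ $c^B\Delta t\le m_i$); your appeal to ``symmetry'' papers over this exactly as the paper's proof does, so it is a defect of the statement rather than a gap specific to your argument.
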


\begin{proof}
  It suffices to show that the inequalities~\eqref{obfem-ineq} hold.
  Following the proof presented in \cite{ruppenthal2023optimal}, we define
  $c^B=\rho/\left(\sum_{j=1}^{N}\omega_j\right)$ and
  decompose
  $\rho$ into nodal contributions
 $$r_i^B= \frac{\omega_i\rho}{\sum_{j=1}^{N}\omega_j}$$
  such that $\sum_{i=1}^{N}(r_i^B-r_i)=\rho-\rho=0$ and $r_i^B=c^B(u_i^*-u_i^{\rm old})$, where
  $$
  u_i^{*}=\begin{cases}
  u_i^{\min} & \mbox{if}\ \rho<0,\\
  0  & \mbox{if}\ \rho=0,\\
  u_i^{\max} & \mbox{if}\ \rho>0.
  \end{cases}
  $$
  Let $\dot u^B$ be a solution of $g(\dot u^B)=r^B-r$.  Then
  $$u_i^B:=u_i^{\rm old}+\frac{\Delta t}{m_i}(r_i+g_i(\dot u^B))=u_i^{\rm old}+
  \frac{\Delta tr_i^B}{m_i}=u_i^{\rm old}+\frac{c^B\Delta t}{m_i}(u_i^*-u_i^{\rm old})$$ is a convex
  combination of $u_i^{\rm old}\in [u_i^{\min},u_i^{\max}]$ and
  $u_i^*\in[u_i^{\min},u_i^{\max}]$ because $c^B\ge 0$ by definition and 
$c^B\Delta t\le m_i$ by \eqref{solvcond}.
\end{proof}

\begin{remark}
In OB schemes for conservation laws, the auxiliary quantity $\rho$ is a sum of boundary
fluxes. If periodic boundary conditions are imposed, then $\rho=0$, and the solvability
condition \eqref{solvcond} is satisfied. In general, the validity of~\eqref{solvcond} can be enforced by choosing a sufficiently small time step
$\Delta t$.
\end{remark}
\begin{remark}
If there exists a bound-preserving low-order approximation $u^L$ such
that ${\sum_{i=1}^N m_i(u_i^L-u_i^{\rm old})=\Delta t\rho}$, then the
backup $u^B=u^L$ can  be recovered
using $g(\dot u^B)=M_L\frac{u^B-u^{\rm old}}{\Delta t}-r(u^{\rm new},u^{\rm old})$.
For hyperbolic problems with bound-preserving exact solutions, $u^L$
can often be constructed, e.g., using algebraic flux correction tools~\cite{bochev2020optimization,ruppenthal2023optimal}. \red{The optimization
procedure can then be configured to use the backup $\tilde u=u^B$
and a vector $g(w)$ of antidiffusive fluxes in
\eqref{obfem-first}. Such OB formulations
have the important advantage that they produce a consistent
approximation $u=\tilde u$ even in the case $g(w)=0$ and
a more accurate result for optimal choices of $g(w)$.}
\end{remark}

\section{Scalable potential-target \PBB{optimization-based control formulation}}
\label{sec:scalable}

The poor performance of iterative solvers
in~\cite{ruppenthal2023optimal} can be attributed to two root
causes.
One is an inadequate choice of the matrix $A$ for the
objective function.
Another is the direct use of flux potentials as optimization variables;
see~\cite[Remark~8]{ruppenthal2023optimal}.
Introducing
$$g=(M_L-M_C)\dot u$$
and substituting $u=\tilde u+\Delta t\,M_L^{-1}g$
 into the inequality constraints \eqref{obfem-ineq},
we \PBB{reformulate} the optimization problem as follows:  
\begin{subequations}
\label{eq:ocp}
\begin{align}
  \minimize_{g \in \mathbb{R}^N} &\;\; J(g) \\
  \text{subject to} &\;\; 
  \mathbbm{1}^\top g=0, \label{eq:ocp-sum} \\
  &\;\;
  \frac{M_L}{\Delta t} \left( u^{\min} - \tilde u \right) \leq g \leq \frac{M_L}{\Delta t} \left( u^{\max} - \tilde u \right). \label{eq:ocp-bounds}
\end{align}
\end{subequations}
Here and below, inequalities involving vectors are meant to hold pointwise.

The \PBB{``density-state density-target'' (OBFEM-DD)} method proposed by Bochev et. al.~\PBB{\cite{bochev2013fast}}, \cite{bochev2020optimization}
minimizes\footnote{\blue{The superscript `$T$' of a vector
    stands for `target' here and below. The symbol `$\top$' is used
    for transposed quantities.
    The transpose of a vector $u^T$ is denoted by
$(u^T)^\top$.}}
$$
J(g)=\|u-u^T\|_{M_C}^2=\|\tilde u-u^T+\Delta t\,M_L^{-1}g\|_{M_C}^2,
$$
where \PBB{the optimization target} $u^T$ is \PBB{an unconstrained high-order approximation of nodal ``densities''}. This approach leads to efficient algorithms but there is no guarantee
that a solution $\dot u$ of the linear system $(M_L-M_C)\dot u=g$ will
be a good approximation to the vector $\dot u^T$ of potentials that
define the numerical fluxes of a target scheme.

Similarly to the optimization-based method proposed by Liu et al.~\cite{liu2023},
\PBB{OBFEM-DD can only guarantee global mass conservation, which is enforced by a single linear equality constraint.} To achieve better local mass conservation properties, we define
the objective function
\begin{equation}\label{eq:OB-PT-obj}
J(g)=\frac12\|Sg-\dot u^T\|_A^2
\end{equation}
using a vector $\dot u^T$ of target potentials and a linear
operator $S:\R^N\to\R^N$ that solves the discrete Neumann
problem $(M_L-M_C)\dot u=g$ for $\dot u
=Sg$.  By default, we use the semi-norm
$\|\cdot\|_A=\|\cdot\|_{M_L-M_C}$ in the
definition of $J(g)$. The original potential-target
 formulation \cite{ruppenthal2023optimal} uses the $L^2$ norm
$\|\cdot\|_A=\|\cdot\|_{M_C}$. 

 The matrix $M_L-M_C$ has the properties of a discrete Laplacian operator
 and \PBB{has a one dimensional null-space spanned by the constant vector. Thus,}
 $\dot u =Sg$ is unique up to a constant \cite{bochev2005finite}, \PBB{and to}
 ensure the invertibility property
 $$
S[(M_L-M_C)\dot u]=\dot u\qquad\forall \dot u\in\R^N,
$$
we need to impose a linear constraint such as 
$\sum_{i=1}^N\dot u_i=0$
or $\dot u_1=0$. \PBB{In our implementation, we use the latter approach because it amounts to the imposition of a homogeneous Dirichlet condition at a single mesh node. We denote the modified version of the Neumann solution operator $S$ by $\tilde S$.}

We recall that the semi-norm property of $\|\cdot\|_{A}:\R^N\to\R_0^+$ rests on the
triangle inequality and absolute homogeneity properties.
For a symmetric positive semi-definite 
matrix $A\in\R^{N\times N}$, we introduce the scalar product
$$\langle u,v \rangle_A =u^\top A v\qquad\forall u,v\in\R^N ,$$
and define the semi-norm $\|u\|_A \coloneqq \sqrt{\langle u,u \rangle_A}$.
Using the \PBB{Cauchy inequality}, we obtain the estimate
    \begin{align*} 
    \PBB{\| u+v \|^2_A}
    &=\PBB{\| u\|^2_A + 2 \langle v,u \rangle_A + \| v \|^2_A} \\
    &\leq \PBB{\| u\|^2_A + 2\| u\|_A \| v\|_A+ \| v\|^2_A
      = \left(\| u\|_A + \| v\|_A\right)^2.}
      \end{align*}
Taking the square root gives the triangle inequality 
$$\| u + v \|_A \leq \| u \|_A + \| v \|_A\quad ~\forall u,v \in \mathbb{R}^N.$$
Absolute homogeneity of  $\|\cdot\|_{A}$ follows from the fact that 
$$\|s u \|_A = \sqrt{(s u)^\top A (s u)} = \sqrt{s^2 u^\top A u} = |s| \|u\|_A\quad
\forall s\in\R \text{ and } \forall u\in\R^n.$$

To efficiently solve optimal control problems of type~\eqref{eq:ocp},
we use optimization methods that \PBB{exploit} derivative \PBB{information}.
Specifically, the methods that we choose in Section~\ref{sec:solver}
require the gradient vector $\nabla J(g)$ and the application of the
Hessian~$H$ of the objective function \PBB{\eqref{eq:OB-PT-obj}}.
We consider optimal control formulations in which $\|\cdot\|_A$ is chosen to be the
$L^2$ norm, i.e., $A=M_C$, as in~\cite{ruppenthal2023optimal}, and the
discrete Laplacian semi-norm, $A=M_L-M_C$.
For these two formulations, the function evaluations and the requisite
derivative computations are as follows:
\begin{itemize}
\item $L^2$ norm ($A=M_C$),
\begin{equation}
\begin{aligned}
    \label{eq:func-L2}
    J(g) &= \frac{1}{2} \left(S g - \dot u^T\right)^\top M_C \left(S g - \dot u^T\right), \\
    \nabla J(g) &= S^\top M_C \left(S g - \dot u^T\right), \\
    H &= S^\top M_C S; \quad \text{and}
\end{aligned}
\end{equation}
\item discrete Laplacian semi-norm ($A=M_L-M_C$),
\begin{equation}
  \begin{aligned}
    \label{eq:func-semi}
    J(g) &= \frac{1}{2} \left(S g - \dot u^T\right)^\top \left(g - \left(M_L-M_C\right)\dot u^T\right), \\
    \nabla J(g) &= S g - \dot u^T, \\
    H &= S.
  \end{aligned}
\end{equation}
\end{itemize}

We also explore the use of scaling, or `preconditioning,' in the formulation of inequality constraints.
If the bounds in the inequality constraints~\eqref{eq:ocp-bounds} are local, see~\eqref{lmp},
the distance between $\tilde u_i$ and an extremum of its local neighborhood,
which we can write as $\min(\tilde u_i-u_i^{\min},u_i^{\max}-\tilde u_i)$, may
become very small and attain values around machine precision.
To enhance numerical stability, we multiply each inequality
constraint by a component of a positive vector, $d\in\R_+^N$.
Introducing the diagonal matrix $D \coloneqq \left(d_i\delta_{ij}\right)_{i,j=1}^N$,
we define the scaled vector $\tilde g = Dg$ of control variables
and reformulate our constrained optimization problem as follows:
\begin{subequations}
\label{eq:ocp-scaled}
\begin{align}
  \minimize_{\tilde g \in \mathbb{R}^N} &\;\; \tilde J(\tilde g) \coloneqq \frac{1}{2} \left\| \tilde S \tilde g - \dot u^T \right\|^2_{A} \\
  \text{subject to} &\;\;
  \mathbbm{1}^{\top}D^{-1}\tilde g = 0, \\
  &\;\;
  a \coloneqq D \frac{M_L}{\Delta t} \left( u^{\min} - \tilde u \right) \leq
  \tilde g \leq
  D \frac{M_L}{\Delta t} \left( u^{\max} - \tilde u \right) \eqqcolon b.
\end{align}
\end{subequations}
The application of the modified solution operator $\tilde S$ to $\tilde g$ yields $\dot u$ such that
\begin{align*}
    \left( M_L - M_C \right) \dot u = D^{-1}\tilde g=g,\qquad \dot u_1 = 0.
\end{align*}
\PBB{We conclude this section with a few comments related to \eqref{eq:ocp-scaled}.}
First, this `preconditioned' formulation coincides with the
original, \eqref{eq:ocp}, for $d = \mathbbm{1}$.
Second, in practice we often use the scaling $D = M_L^{-1}$.
Third, the function and derivative computations~\eqref{eq:func-L2} and~\eqref{eq:func-semi}
must be modified slightly to accommodate the solution operator~$\tilde S$.
The derivation is left to the reader.

\section{Solvers for the optimal control problem}
\label{sec:solver}

We implement and solve various instances of the optimal control problem~\eqref{eq:ocp-scaled}
using the scientific software libraries MFEM (Modular Finite Element Methods
Library~\cite{Anderson_MFEM_A_Modular_2021}), {\tt hypre}~\cite{hypre},
and ROL (Rapid Optimization Library \cite{ROL2022ICCOPT,rol-website,ridzal2017rapid}), \PBB{which is distributed with} the Trilinos
Project~\cite{heroux2005overview,heroux2012new,trilinos-website}.
We use MFEM for the finite element discretization and the parallel assembly of vectors and
matrices that define problem~\eqref{eq:ocp-scaled},
such as $M_L$, $M_C$, $D$, $\dot u^T$, $a$, and $b$.
All linear systems \PBB{requiring the modified Neumann solution operator $\tilde S$ use} MFEM's conjugate gradient (CG) method and {\tt hypre}'s V-cycle multigrid preconditioner with a Gauss-Seidel smoother \PBB{to compute its action.} Subsequently, we implement ROL's operator-based interface and use its matrix-free optimization
algorithms to solve the optimal control problem.
We now discuss our choices of ROL's solvers.

Optimization problem~\eqref{eq:ocp-scaled} belongs to the class of problems
\[
\minimize_{\tilde g \in \R^n} \;\; \mathcal{J}(\tilde g)
\qquad \text{subject to}  \qquad \tilde g \in \mathcal{C},
\]
where $\mathcal{J} : \R^n \rightarrow \R$ is a smooth function and $\mathcal{C} \subseteq \R^n$
is a nonempty, closed and convex set.
In order to satisfy the constraints as accurately as possible,
we use optimization methods based on projections onto convex sets.
Through the use of projections, we can maintain feasibility, $\tilde g \in \mathcal{C}$,
to within machine precision, while also reducing~$\mathcal{J}(\tilde g)$.
This is a key consideration.
Additionally, since our objective functions depend on applications of the solution
operator $\tilde S$---ideally requiring only \emph{approximate} linear system solves
using multigrid methods---we aim for optimization algorithms that can efficiently handle
\emph{inexact evaluations} of $\mathcal{J}$ and its derivatives.

An algorithm that satisfies these two key requirements is a trust-region method
termed `Lin-Mor\'{e}' in ROL.
This method, developed in~\cite[Algorithm 1]{kouri2022matrix}, is a generalization of the
trust-region Newton method proposed by Lin and Mor\'e~\cite{lin1999newton} and includes
the mechanisms to handle inexact function evaluations, developed recently
by Baraldi and Kouri~\cite{baraldi2023proximal}.
At each iteration, the method solves the subproblem
\begin{equation}
\label{eq:TR}
\minimize_{\tilde g \in \R^n} \;\; m_k(\tilde g)
\qquad \text{subject to}  \qquad \tilde g \in \mathcal{C},
                          \quad \|\tilde g - {\tilde g}_k\| \le \Delta_k,
\end{equation}
where $m_k : \R^n \rightarrow \R$ is a model of the objective function $\mathcal{J}$,
${\tilde g}_k$ is the current ($k$-th) iterate, and
$\Delta_k > 0$ is the trust-region radius.
The model we use is the second-order Taylor expansion of $\mathcal{J}$ around ${\tilde g}_k$,
\begin{equation}
\label{eq:TRmodel}
  m_k(\tilde g) = \langle \nabla^2 \mathcal{J}({\tilde g}_k) (\tilde g - {\tilde g}_k), \tilde g - {\tilde g}_k \rangle
  + \langle \nabla \mathcal{J}({\tilde g}_k) , \tilde g - {\tilde g}_k \rangle
  + \mathcal{J}({\tilde g}_k),
\end{equation}
which requires the derivative computations~\eqref{eq:func-L2} or~\eqref{eq:func-semi},
discussed in Section~\ref{sec:scalable}.

The constraint $\tilde g \in \mathcal{C}$ is enforced through projections.
Recalling the distance~$\mathcal{D}$ of a point $\tilde g$ to the set~$\mathcal{C}$,
\[
\mathcal{D}(\tilde g, \mathcal{C}) \coloneqq
\inf \left\{ \|\tilde g - y\| \; \vert \; y \in \mathcal{C} \right\},
\]
we note that for a closed convex set we can define the
projection~$P_{\mathcal{C}}(\tilde g) \in \mathcal{C}$,
which is a unique point satisfying
$\|\tilde g - P_{\mathcal{C}}(\tilde g)\| = \mathcal{D}(\tilde g, \mathcal{C})$.
Alternatively, we can write
\[
  P_{\mathcal{C}}(\tilde g) = \argmin_{y \in \mathcal{C}} \| \tilde g - y \|^2.
\]
The feasible set in problem~\eqref{eq:ocp-scaled} is a convex polyhedron,
\begin{equation}
\label{eq:proj}
C \coloneqq
\left\{ \tilde g \in \mathbb{R}^N \; \vert \;
a \leq \tilde g \leq b ~\wedge ~ \mathbbm{1}^{\top} D^{-1} \tilde g = 0 \right\},
\end{equation}
defined as the intersection of a box inequality and a single linear equality.
Projections onto $C$ are performed in ROL using
the approach by Dai and Fletcher \cite[Algorithm 1]{dai2006new}, which relies on simple bracketing
and a secant iteration.  Even though this is an iterative approach, it typically
converges in only a handful of iterations to machine zero
\red{(see \cite[Sec.~3]{bochev2013fast})}.
For problem~\eqref{eq:ocp-scaled}, the norm in~\eqref{eq:TR} and the scalar
product in~\eqref{eq:TRmodel} are defined consistently so
that $\|\cdot\| = \sqrt{\langle \cdot,\cdot \rangle} = \|\cdot\|_{M_L}$.

Next, we consider the solution of~\eqref{eq:TR} with the quadratic model~\eqref{eq:TRmodel}.
The `Lin-Mor\'{e}' method in ROL applies a truncated CG method~\cite{conn2000trust}, which
in the case of~\eqref{eq:proj} iterates in the null space of the equality
constraint~$\mathbbm{1}^{\top} D^{-1} \tilde g = 0$
while also satisfying the trust-region constraint.
As the CG steps may violate the bounds in~\eqref{eq:proj}, the truncated
CG iteration is followed by a projected search to restore feasibility, as necessary.
In general, this also means that all iterates remain feasible, up to the accuracy of
the projection onto the convex set $\mathcal{C}$.

We noted that the applications of the solution operator~$\tilde S$
in the evaluations of the objective function and
its derivatives are performed using a CG method with a multigrid preconditioner.
As a consequence, the objective function, the gradient, the
application of the Hessian, and our model~\eqref{eq:TRmodel} are all computed approximately.
To handle such approximations in a rigorous and efficient manner, the `Lin-Mor\'{e}' method
in ROL includes special inexactness conditions, due to~\cite{baraldi2023proximal}.
These conditions provide tolerances for the inexact evaluations, which
are easily implemented as \emph{relative stopping tolerances} for the
CG solver that applies the solution operator~$\tilde S$.
To control the inexactness in the objective function value, ROL
uses~\cite[Algorithm 3]{baraldi2023proximal}.
For the inexactness in the gradient computation, ROL
uses~\cite[Algorithm 4]{baraldi2023proximal}.
For the application of the Hessian, it is sufficient to
select any reasonable, e.g., fixed, relative stopping tolerance, such as 0.1.

The termination criteria for the `Lin-Mor\'{e}' method are based on the projected
gradient criticality measure
\[
  \left\| P_{\mathcal{C}} \left( \tilde g - \nabla \mathcal{J}(\tilde g) \right) - \tilde g \right\|.
\]
Unless noted otherwise, we stop the optimization algorithm at ${\tilde g}_\text{final}$
if the criticality measure is reduced by at least six orders of magnitude relative to
its value at the initial guess, i.e.,
\[
 \left\| P_{\mathcal{C}} \left( {\tilde g}_{\text{final}} -
    \nabla \mathcal{J}({\tilde g}_{\text{final}}) \right) - {\tilde g}_{\text{final}} \right\| /
 \left\| P_{\mathcal{C}} \left( {\tilde g}_0 - \nabla \mathcal{J}({\tilde g}_0) \right) - {\tilde g}_0 \right\|
    \le 10^{-6} ,
\]
or if the criticality measure is very small, specifically
\[
 \left\| P_{\mathcal{C}} \left( {\tilde g}_{\text{final}} -
    \nabla \mathcal{J}({\tilde g}_{\text{final}}) \right) - {\tilde g}_{\text{final}} \right\|
    \le 10^{-9} .
\]

\section{Numerical experiments}
\label{sec:examples}
\PBB{This section examines numerically the new potential-target OB formulation in two distinct simulation scenarios that require the enforcement of discrete maximum principles to avoid spurious oscillations. The first one involves conservative projections (remapping) of nonsmooth data, while the second one applies the approach to} finite element discretizations of conservation laws. 

The result of our remapping test provides initial data for a solid body rotation test in which we solve a linear advection problem.  To show the benefits of optimal control in the context of nonlinear problems, we apply our OB algorithm to several benchmarks for the Cahn-Hilliard equation. In all numerical experiments, we use the diagonal scaling operator $D=M_L^{-1}$. In addition to checking the validity of imposed constraints and visually inspecting the quality of numerical solutions, we \PBB{evaluate} the efficiency and scalability of the proposed algorithms. 

\subsection{Remap problem} \label{test:rm}
The initial data for the advection
 test proposed by Le\-Veque \cite{leveque1996high} is given by 
$$
   u_0(x,y)=\begin{cases}
    u_0^{\rm hump}(x,y)
     &\text{if}\  \sqrt{(x - 0.25)^2 + (y - 0.5)^2}\le 0.15, \\
    u_0^{\rm cone}(x,y)
     &\text{if}\ \sqrt{(x - 0.5)^2 + (y - 0.25)^2}\le 0.15, \\
     1 &\text{if}\ \begin{cases}
       \left(\sqrt{(x - 0.5)^2 + (y - 0.75)^2}\le 0.15 \right) \wedge \\
       \left(|x - 0.5| \ge 0.025 \vee \ y\ge 0.85\right),
     \end{cases}\\
      0 &  \text{otherwise}.
    \end{cases}
   $$
   A good algorithm for projecting $u_0$ into the finite element space should be
   accurate, conservative and bound preserving.
   We use optimal control to enforce these principles.
   
   Let $b \coloneqq \left(\int_\Omega \varphi_i u_0 \dx\right)_{i=1}^N$  be the
   right-hand side of the linear systems
   $$ M_C u^H = b\quad{\rm and}\quad M_L u^L = b.$$
   The consistent $L^2$ projection
   $u_h^H=\sum_{j=1}^Nu_j^H\varphi_j$
   of $u_0$ is conservative and minimizes the $L^2$ error. However, $u_h^H$ is
   generally not bound preserving. The lumped-mass approximation
   $u_h^L=\sum_{j=1}^Nu_j^L\varphi_j$ is conservative and bound
   preserving
   \cite{kuzmin-hajduk2022,kuzmin2010failsafe}. However, $u_h^L$ is less accurate than $u_h^H$ in smooth regions.

   We treat $u^H$ as a target and $u^L$ as a backup for
   our optimization procedure. By definition, the two conservative projections are related by
   \beq\label{L2target}
   M_L u^H = M_L u^L + \left(M_L - M_C\right) u^H.
   \eeq
   This representation of the unconstrained remapping scheme has the structure of  an explicit Euler update with the pseudo-time step $\Delta t = 1$.

Introducing a generic vector of flux potentials $\dot u$, we generalize
   \eqref{L2target} as follows:
   $$ M_L u = M_L u^L + \left(M_L - M_C\right) \dot u.$$
   The choices $\dot u=u^H$ and $\dot u=0$ produce $u=u^H$ and
    $u=u^L$, respectively.
The optimal potential $\dot u$ is the solution of
\begin{align*}
  \minimize_{\tilde g \in \mathbb{R}^N} &\;\;
    \frac{1}{2} \left\| \tilde S \tilde g - u^H \right\|^2_{M_L-M_C} \\
  \text{subject to} &\;\;
  \mathbbm{1}^{\top}D^{-1}\tilde g = 0, \\
  &\;\;
  D M_L\left( u^{\min} - u^L\right) \leq \tilde g \leq D M_L \left( u^{\max} - u^L\right).
\end{align*}
The local bounds that we use in this remapping test are defined by
$$ u^{\min}_i = \min_{j\in \mathcal N_i} u_j^L,\qquad u^{\max}_i = \max_{j\in \mathcal N_i} u_j^L,$$
where $\mathcal N_i$ is the set node indices $j$ such that $m_{ij}\ne 0$.
Since the low-order backup~$u^L$ is bound preserving, we have the property that
$$
u_i^{\min}\ge u^{\min}=\min_{\Omega} u_0,\qquad
u_i^{\max}\le u^{\max}=\max_{\Omega} u_0.
$$

The discontinuity of $u_0$ results in a violation of local and
global bounds by $u_h^H$
in this test. The gradients of continuous finite element approximations become steeper
as the mesh is refined. As a consequence, solvers for optimization 
problems require more iterations to converge. In our experience, the
use of the semi-norm \mbox{$\|\cdot\|_{M_L-M_C}$} in our definition of the
objective function is essential for achieving scalable convergence
behavior and penalizing the $H^1$ semi-norm of the projection error. The imposition
of local bounds prevents oscillations within the range $[u^{\min},u^{\max}]$
but small distances between $u_i^{\min}$ and  $u_i^{\max}$ require
more projection steps compared to box constraints with
global bounds.

We discretize the computational domain $\Omega=(0,1)^2$ using a uniform 
grid that consists of 64$\times$64 square cells.
Projections of $u_0$ are performed into the space of continuous bilinear
($\mathbb{Q}_1$) finite elements. The backup solution $u_h^L$ and target solution $u_h^H$ are shown in Figures \ref{fig:remap_backup} and \ref{fig:remap_target}, respectively. At first, we notice that the nodal values of $u_h^L$ stay in $[u^{\min},u^{\max}]=\left[0,1\right]$, while $u_h^H$ exhibits overshoots and undershoots. The optimal control solution $u_h^{\rm opt}$ shown in Figure~\ref{fig:remap_opt} preserves the global bounds as accurately as machine precision permits. The difference between $u_h^{\rm opt}$ and the nonconservative interpolant $u_h^{\rm int}=\sum_{j=1}^Nu_0(\mathbf{x}_j)\varphi_j$ of the input data is displayed in Figure~\ref{fig:remap_diff}. We observe that the two approximations are comparable in terms of accuracy. The largest deviations occur around the slotted cylinder, as well as at the top and bottom of the cone.

The constraints of our optimization problem are satisfied by the converged solution~$u_h^{\rm opt}$. The stopping criteria for the calculation of $u_h^{\rm opt}$ are met after 15 iterations in this example. For comparison purposes, we present the intermediate solutions $u^{\text{opt}}_3$ and $u^{\text{opt}}_7$ after three and seven iterations in Figures \ref{fig:remap_opt_i1} and \ref{fig:remap_opt_i2}, respectively. Although these approximations are not yet optimal, the bounds of the inequality constraints are preserved, and the mass defect $\sum_{j=1}^Ng_j$ is zero to machine precision. The quality of  constrained projections improves in the process of optimization. A visual comparison of $u^{\text{opt}}_{h,3}$ and $u^{\text{opt}}_{h,7}$ reveals some differences in the resolution of the upper part of the cylinder. We see that $u^{\text{opt}}_{h,3}$ still exhibits some spurious oscillations, whereas $u^{\text{opt}}_{h,7}$ is free of them. After 12 optimization steps, no noticeable differences are observed between subsequent iterates.
\begin{figure}
    \begin{subfigure}{0.495\textwidth}
        \includegraphics[width=\textwidth]{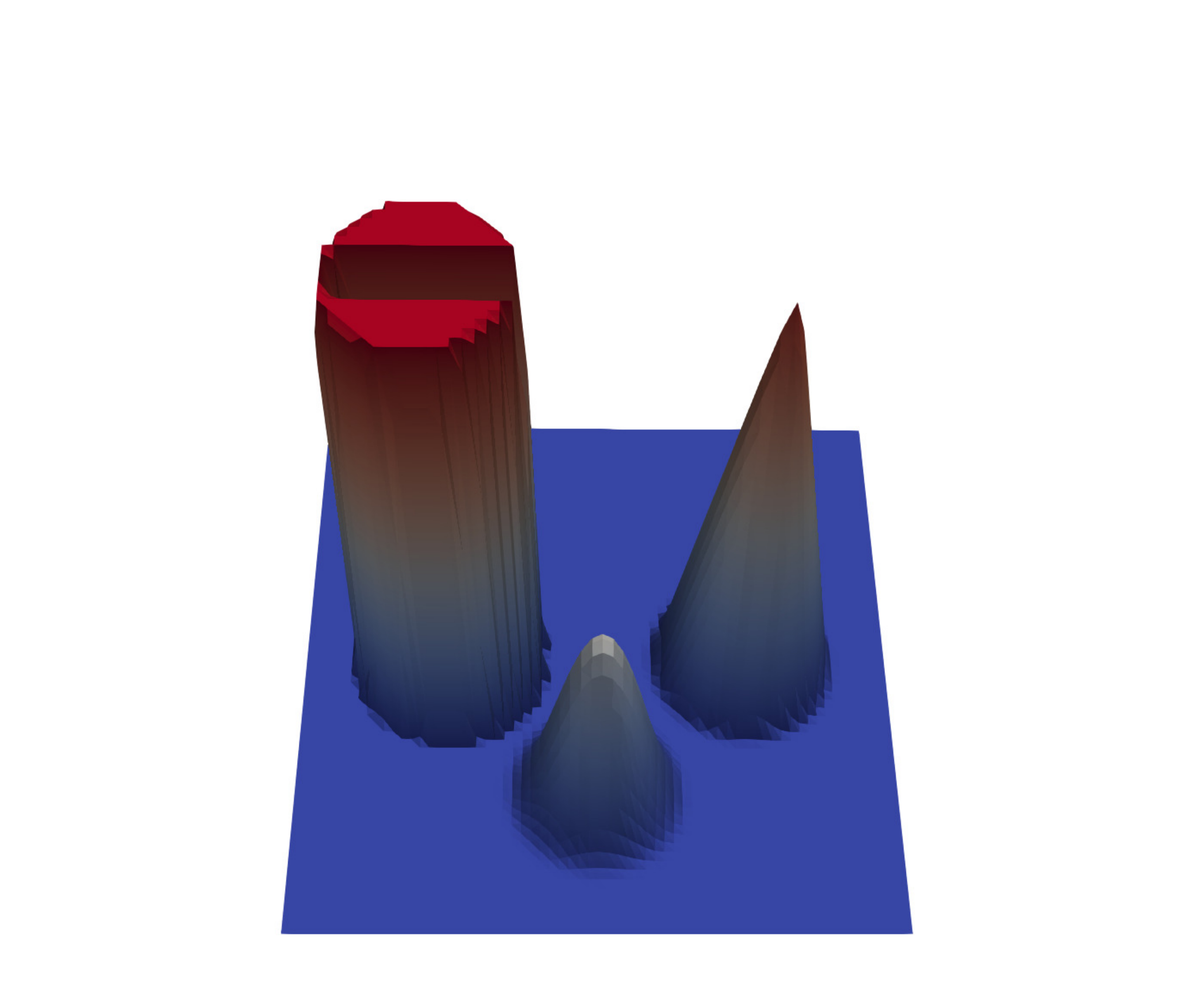}
        \caption{Backup solution;\\ $u_h^L \in \left[0,1\right]$.}
        \label{fig:remap_backup}
    \end{subfigure}
    \begin{subfigure}{0.495\textwidth}
        \includegraphics[width=\textwidth]{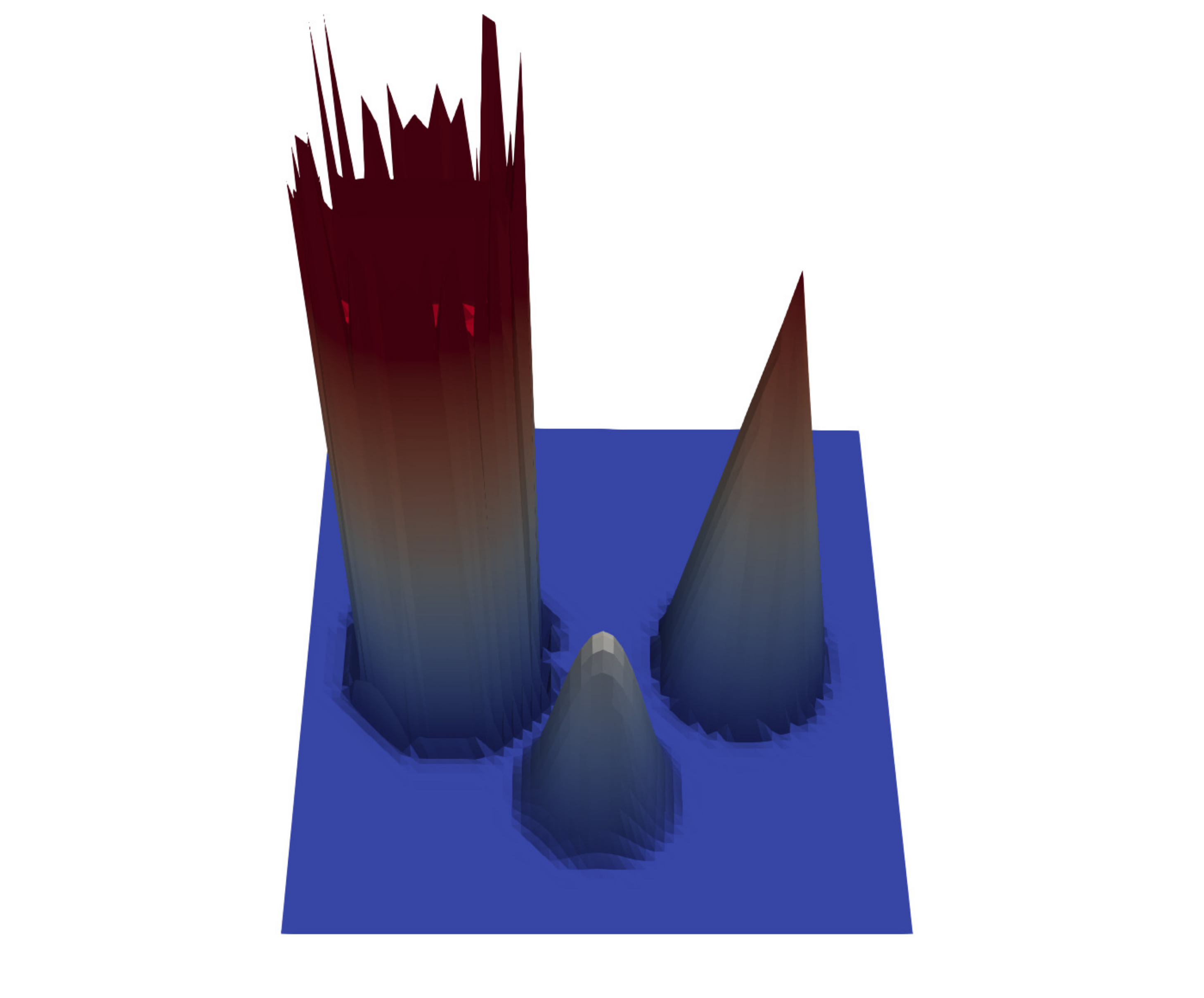}
        \caption{Target solution;\\ $u_h^H \in \left[-0.334,1.352\right]$.}
        \label{fig:remap_target}
    \end{subfigure}
    \begin{subfigure}{0.495\textwidth}
        \includegraphics[width=\textwidth]{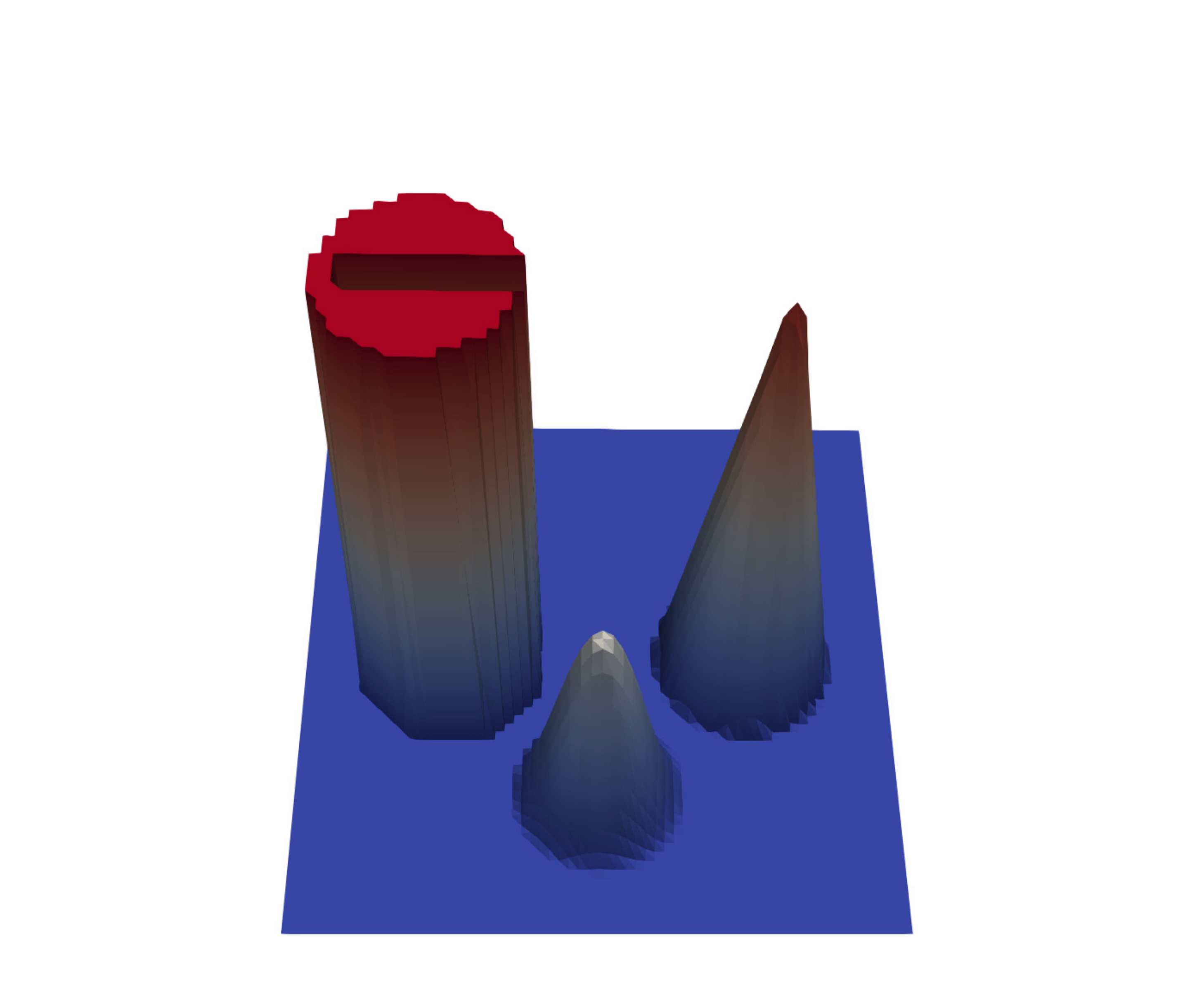}
        \caption{Optimal solution;\\ $u_h^{\text{opt}} \in \left[0,1\right]$.}
        \label{fig:remap_opt}
    \end{subfigure}
    \begin{subfigure}{0.495\textwidth}
        \includegraphics[width=\textwidth]{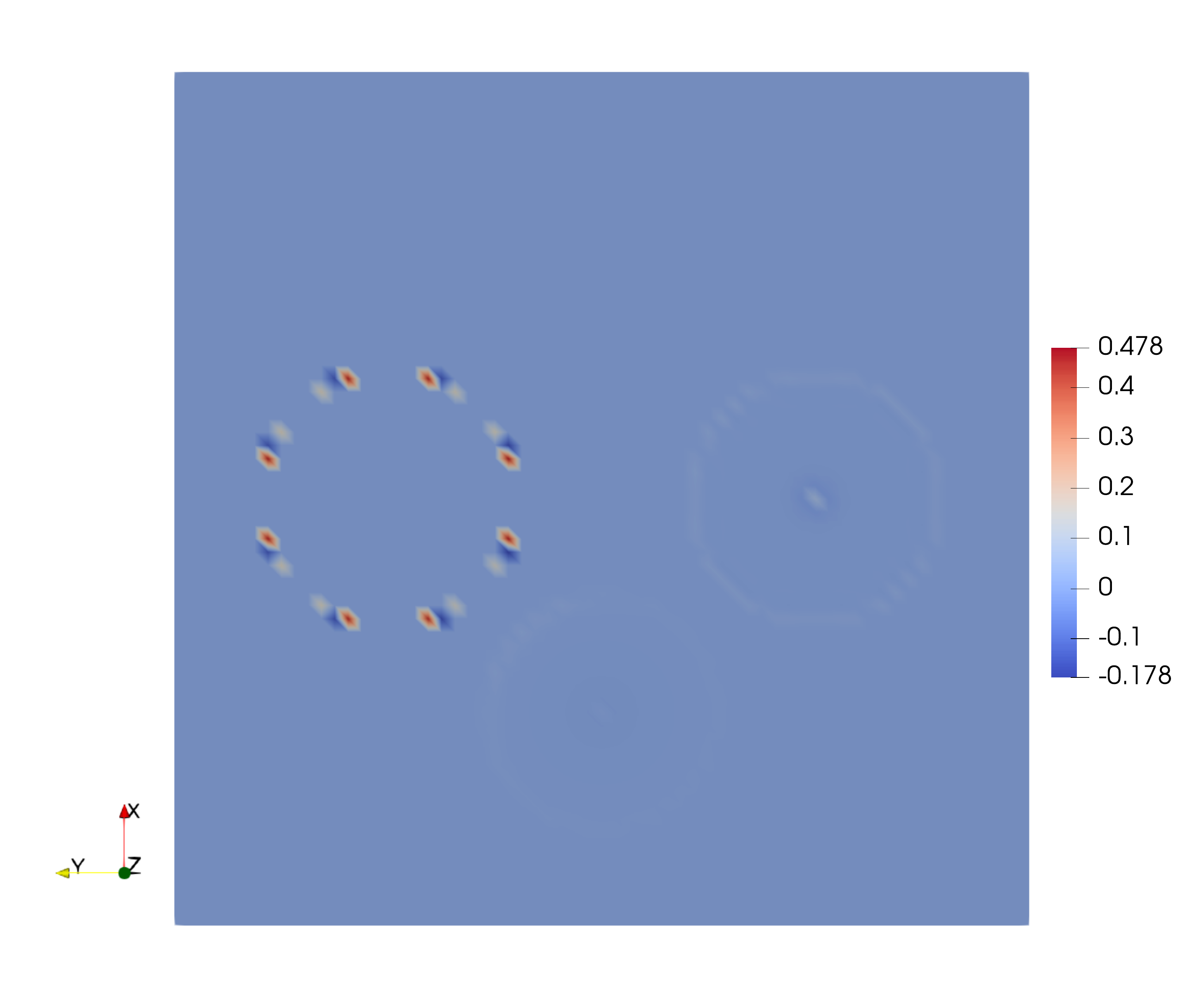}
        \caption{Optimization vs.\ interpolation;\\
        \centering $u_h^{\rm int} - u_h^{\text{opt}} \in \left[-0.178, 0.478 \right]$.
        }
        \label{fig:remap_diff}
    \end{subfigure}
    \begin{subfigure}{0.495\textwidth}
        \includegraphics[width=\textwidth]{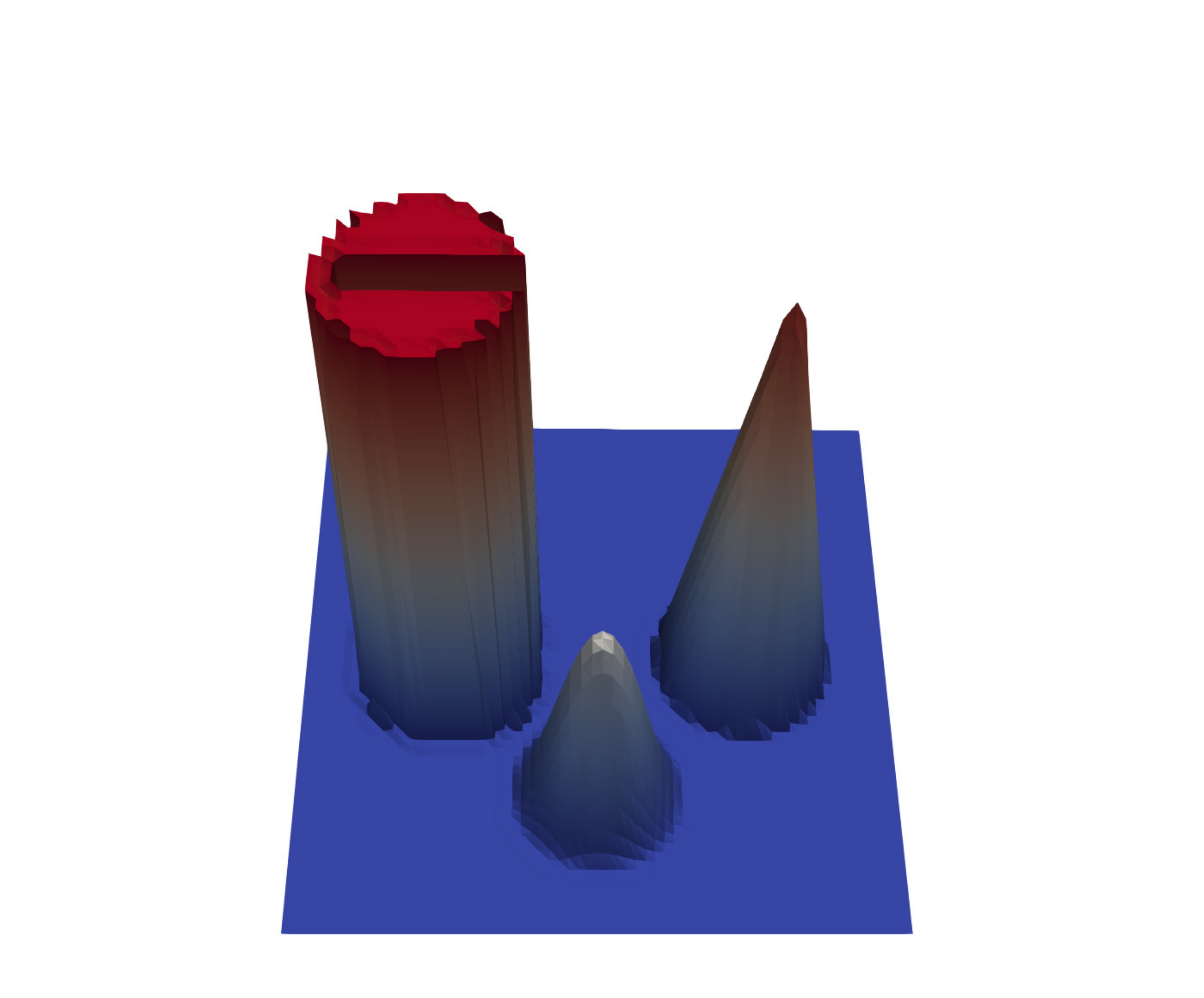}
        \caption{Intermediate result after 3 iterations;\\ $u^{\text{opt}}_{h,3} \in \left[0,1\right]$.}
        \label{fig:remap_opt_i1}
    \end{subfigure}
    \begin{subfigure}{0.495\textwidth}
        \includegraphics[width=\textwidth]{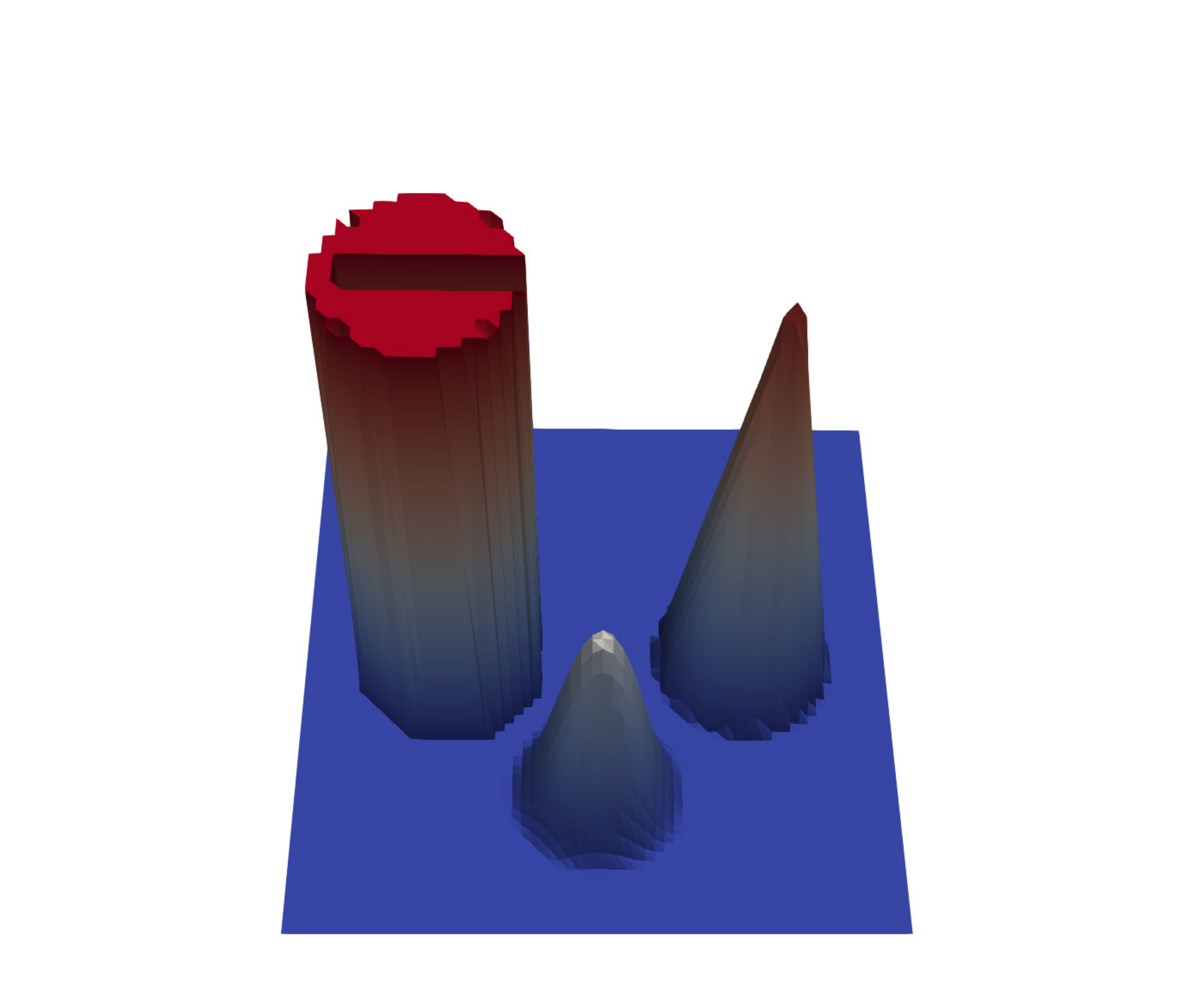}
        \caption{Intermediate result after 7 iterations;\\ $u^{\text{opt}}_{h,7} \in \left[0,1\right]$.}
        \label{fig:remap_opt_i2}
    \end{subfigure}
    \caption{Approximate solutions of the remap problem.}
\end{figure}

\begin{remark}
  In practice, we never use $\dot u=Sg$ to calculate the matrix-vector product $(M_L-M_C)\dot u$ in the final or intermediate stages of the optimization procedure. Since $(M_L-M_C)Sg=g$ by definition of the solution operator $S$, the direct use of $g$ is more efficient. Moreover, the calculation of $(M_L-M_C)Sg$ produces numerical errors that may result in slight violations of the imposed constraints. However, an approximate solution to $S\dot u=g$ is needed to evaluate the objective function and the gradient. In this context, errors due to imprecise calculation of $\dot u$ have no adverse impact on the validity of the constraints.
\end{remark}

To verify the scalability of our proposed algorithm, we solve another remap problem on a sequence uniformly refined meshes and check how the total number of iterations depends on the mesh size. The data to be projected is given by
$$u_0(x,y) = H_\epsilon (\phi(x))H_\epsilon (\phi(y)),$$
where we use $\phi(x) = 0.1 - \left | x - 0.5\right |$ and the regularized Heaviside function
$$
	H_\epsilon\left(\phi\right) = \begin{cases}
	0, &\text{if} ~\phi \leq -\epsilon, \\
	\frac12 \left(1+\frac\phi\epsilon + \frac1\pi\sin\left(\pi \frac\phi\epsilon\right)\right), &\text{if} ~-\epsilon < \phi < \epsilon,\\
	1, &\text{if} ~\phi \geq \epsilon,
	\end{cases}
$$
to generate a rectangular block with steep gradients for $\epsilon = \frac{3}{80}$. The boundedness of $\nabla u_0$ allows a better comparison of the iteration numbers for different mesh levels.

        Figure \ref{fig:conv_study_init} shows the result of the optimization-based projection into the finite element space corresponding to the $64\times64$ grid. The results of our grid convergence study are listed in Table \ref{tab:scal_conv}.
\begin{table}\centering
\begin{tabular}{ccccccc}
\hline
mesh size && \#iter & \#fval & \#grad & \#proj & $\chi(\tilde g_\text{final})$ \\ 
\hline
1 / 16 && 8 & 9 & 9 & 71 & 3.86e-12 \\
1 / 32 && 7 & 8 & 8 & 55 & 8.64e-11 \\
1 / 64 && 5 & 6 & 6 & 41 & 4.71e-12 \\
1 / 128 && 6 & 7 & 7 & 48 & 1.12e-11 \\
1 / 256 && 1 & 2 & 2 & 13 & 5.89e-11 \\
1 / 512 && 1 & 2 & 2 & 13 & 5.86e-11 \\
1 / 1024 && 1 & 2 & 2 & 15 & 1.12e-12 \\
\hline
\end{tabular}
\caption{Impact of mesh refinement on the scalability of optimal control.}
\label{tab:scal_conv}
\end{table}
We terminate the optimization algorithm at the iterate~$\tilde{g}_{\text{final}}$
when the stopping criterion
\[
  \chi(\tilde g_\text{final}) \coloneqq
  \min(\|P_C(\tilde{g}_{\text{final}} - \nabla \tilde J(\tilde{g}_{\text{final}})) - \tilde{g}_{\text{final}} \|,
  \tilde J(\tilde{g}_{\text{final}})) < 10^{-10} 
  \]
is met. In other words, we stop
iterating if the criticality measure or the value of the objective function
become very small. In either case, the constraints are satisfied to machine precision.
In our experiments, the number iterations (\#iter) becomes equal to 1
on fine meshes.
The numbers of function evaluations (\#fval) and gradient computations (\#grad)
do not change significantly as we refine the mesh.
The numbers of projections onto the convex set $C$ (\#proj) vary slightly. However, there is no appreciable increase as the mesh size decreases.
Hence, our optimal control approach scales well with mesh refinement.

\begin{figure}
    \centering
    \includegraphics[width=.5\textwidth]{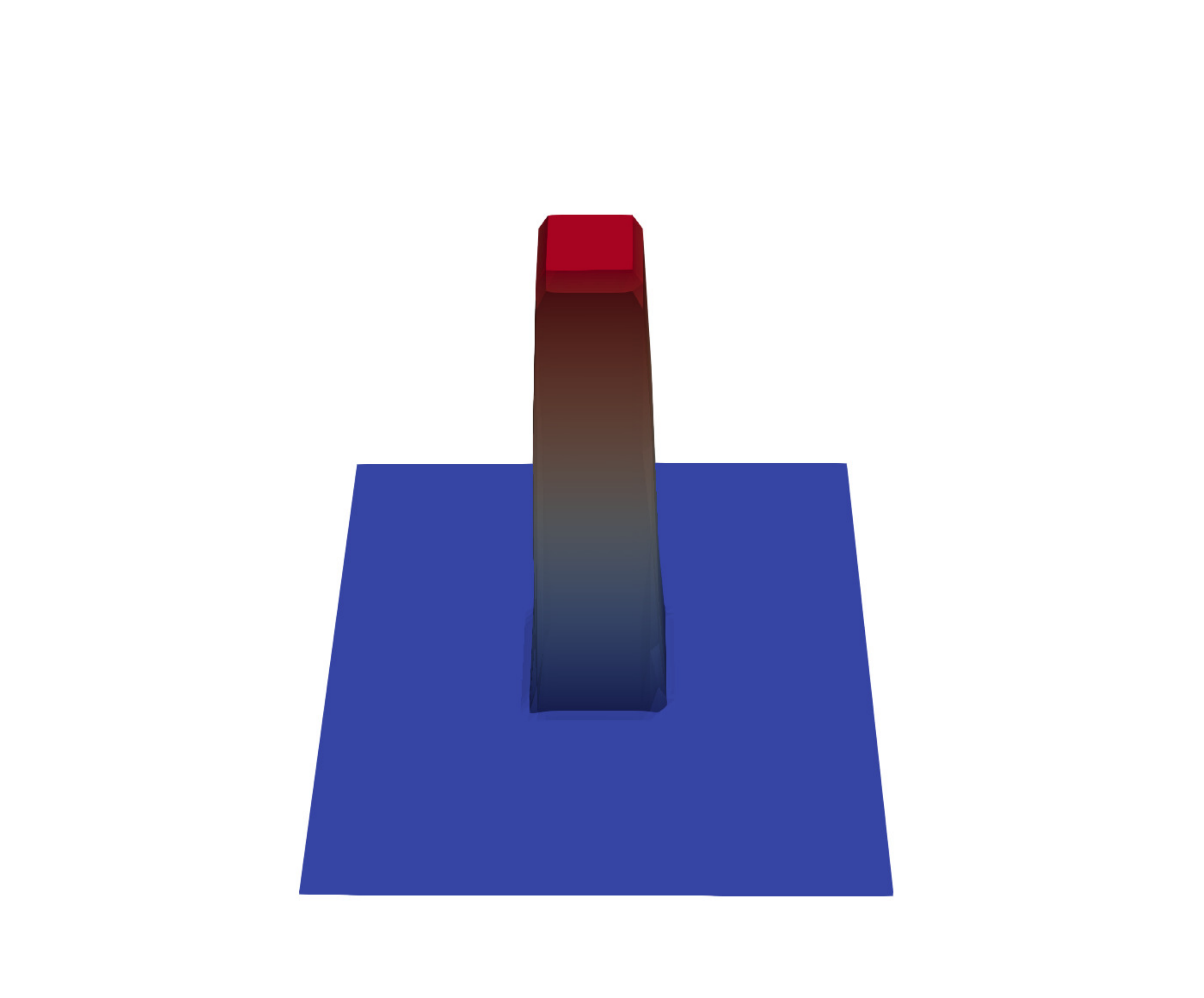}
    \caption{Optimization-based projection $u_h^{\rm opt} \in \left[0,1\right]$ on the 64 $\times$ 64 grid.}
    \label{fig:conv_study_init}
\end{figure}

\subsection{Advection problem}
In our second numerical example, we apply our optimal control formulation to a
finite element discretization of the initial-boundary value problem
\begin{alignat*}{4}
    \pd{u}{t} + \nabla\cdot\left(\mathbf{v} u\right) &= 0 & & \text{ in } & & \Omega\times\left(0,2\pi\right], \\
    u &= u_0 & & \text{ on } & & \Omega\times\left\{0\right\}, \\
    u &= 0 & & \text{ on } & & \Gamma_{-}\times\left[0,2\pi\right],
\end{alignat*}
where $\Gamma_{-}$ is the inflow boundary of $\Omega=(0,1)^2$. The
velocity field is given by $$\mathbf{v}\left(x,y\right) = \left( 0.5 - y, x - 0.5 \right)^{\top}.$$
The initial data $u_0$ is defined as in the remap test. For simplicity,
our numerical solutions are initialized by the lumped-mass $L^2$ projection
of $u_0$.

Using the standard Galerkin method to discretize the time-dependent 
advection equation in space, we obtain the semi-discrete problem
$$
  M_C \td{u}{t} =  K u+ b(u). 
  $$
The coefficients of $K=(k_{ij})_{i,j=1}^N$ and $b=(b_i)_{i=1}^N$ are given by  
\begin{gather*}
  ~ k_{ij} = \int_\Omega \nabla\varphi_i\cdot\mathbf{v} \varphi_j \dx, \qquad
    b_i(u) = - \frac12\int_{\Gamma} \varphi_i \left(\left | \mathbf{v} \cdot \mathbf{n} \right | + \mathbf{v} \cdot \mathbf{n} \right) u \ds.
\end{gather*}
For stabilization purposes, we include high-order dissipation
of the form \cite{kuzmin2023dissipation}
$$
d_h(w_h,u_h)=\nu\int_\Omega\nabla w_h\cdot (\nabla u_h-P_h\nabla u_h)\dx,
$$
where $P_h$ is the consistent $L^2$ projection operator and
$w_h$ is a test function belonging to the finite element space.
We use the constant artificial diffusion coefficient $\nu=\frac{\lambda h}2$,
where $\lambda=\frac{1}{10}
\max_{\mathbf{x}\in \bar\Omega}|\mathbf{v}(\mathbf{x})|$ and
$h$ is the mesh width parameter.
The bilinear form $d_h(\cdot,\cdot)$ defines the entries
$d_{ij}=d_h(\varphi_i,\varphi_j)$ of 
the high-order stabilization operator $D=(d_{ij})_{i,j=1}^N$ that
we add to $K$.

Let us discretize in time using Heun's two-stage strong stability preserving (SSP) Runge-Kutta method. The resulting fully discrete scheme reads
\begin{align*}
    u_{[1]}^H &= u_{[0]}^H + \Delta t M_C^{-1} (K + D) u_{[0]}^H, \\
    u_{[2]}^H &= u_{[1]}^H + \Delta t M_C^{-1} ( K + D) u_{[1]}^H, \\
    u_{[3]}^H &= \frac12 \left(u_{[0]}^H  + u_{[2]}^H \right).
\end{align*}
In each stage, we need to solve a linear discrete problem of the form
$$
M_Lu^T = M_Lu^B + \Delta t(K + D) u^B+(M_L-M_C)(u^T - u^B).
$$
Replacing $u^T - u^B$ with a general vector $\Delta t \dot u$ of flux
potentials, we constrain
\beq\label{AdvOB}
M_Lu = M_Lu^B + \Delta t(K + D) u^B+\Delta t (M_L-M_C)\dot u
\eeq
to preserve the local bounds
$$ u^{\min}_i = \min_{j\in \mathcal N_i} u_j^B,\qquad u^{\max}_i = \max_{j\in \mathcal N_i} u_j^B.$$
The objective function of our optimization problem is defined
using $$\dot u^T = \frac{u^T - u^B}{\Delta t}.$$ 
\begin{remark}
  If $u^L = u^B + \Delta t r^L$ is a consistent, conservative,
  and bound-preserving low-order approximation to $u^T$, then
  \eqref{AdvOB} is equivalent to
  $$
  M_Lu = M_Lu^L +\Delta t[(K + D) u^B-r^L]
  +(M_L-M_C)\dot v
  $$
  with $\dot v=\dot u+S(\Delta t r^L)$. The corresponding
  optimal solutions are the same but $u^L$ might be a better
  initial guess for the iterative solver than $u^B$.
\end{remark}

In our numerical experiments, we use $128\times128$ bilinear finite
elements and the time step $\Delta t = 10^{-3}$. Figure \ref{fig:adv_ana}
shows the (nonconservative) interpolant $u_h^{\rm int}=I_hu_0$ of the
initial condition. Numerical solutions are output
at the final time $T=2\pi$, at which the exact solution of the
solid body rotation problem coincides with the initial data. The
solution $u_h^H$ produced by the unconstrained target scheme is
depicted in Figure~\ref{fig:adv_ref}. Despite the use of high-order
stabilization, $u_h^H$ does not preserve the global
bounds $u^{\min}=0$ and $u^{\max}=1$. To better show the location of
undershoots and overshoots, we plot them in black in Figure
\ref{fig:adv_ref_vio}. To enforce local maximum principles,
we activate our optimal control procedure. The result is shown in
Figure~\ref{fig:adv_opt}. It can be seen that the constrained approximation
$u_h^{\rm opt}$ is locally bound preserving and free of spurious oscillations.
\begin{figure}
      \begin{subfigure}{0.495\textwidth}
        \includegraphics[width=\textwidth]{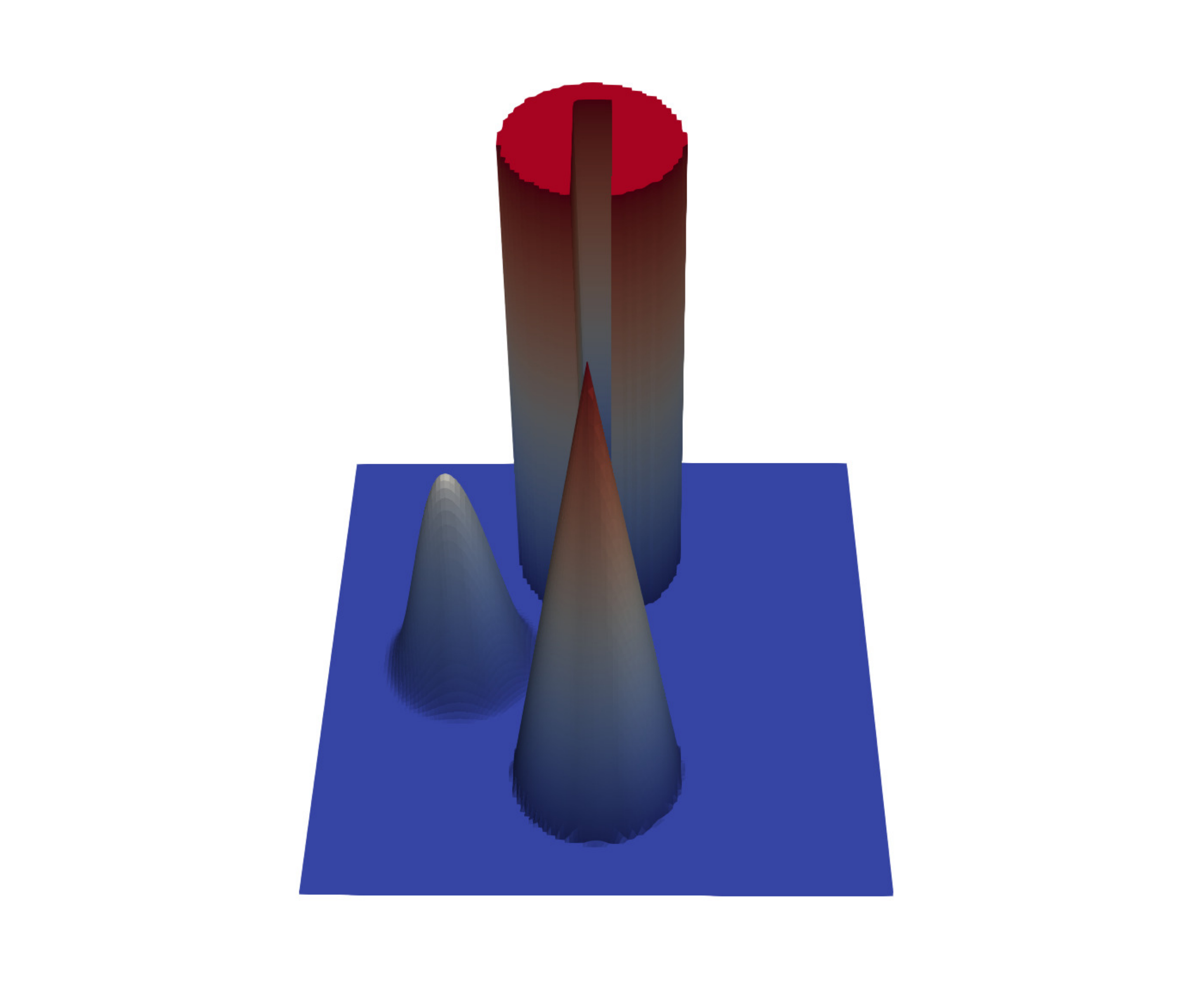}
        \caption{Interpolant of $u_0$; \\ $u_h^{\rm int} \in \left[0,1\right]$.}
        \label{fig:adv_ana}
    \end{subfigure}
    \begin{subfigure}{0.495\textwidth}
        \includegraphics[width=\textwidth]{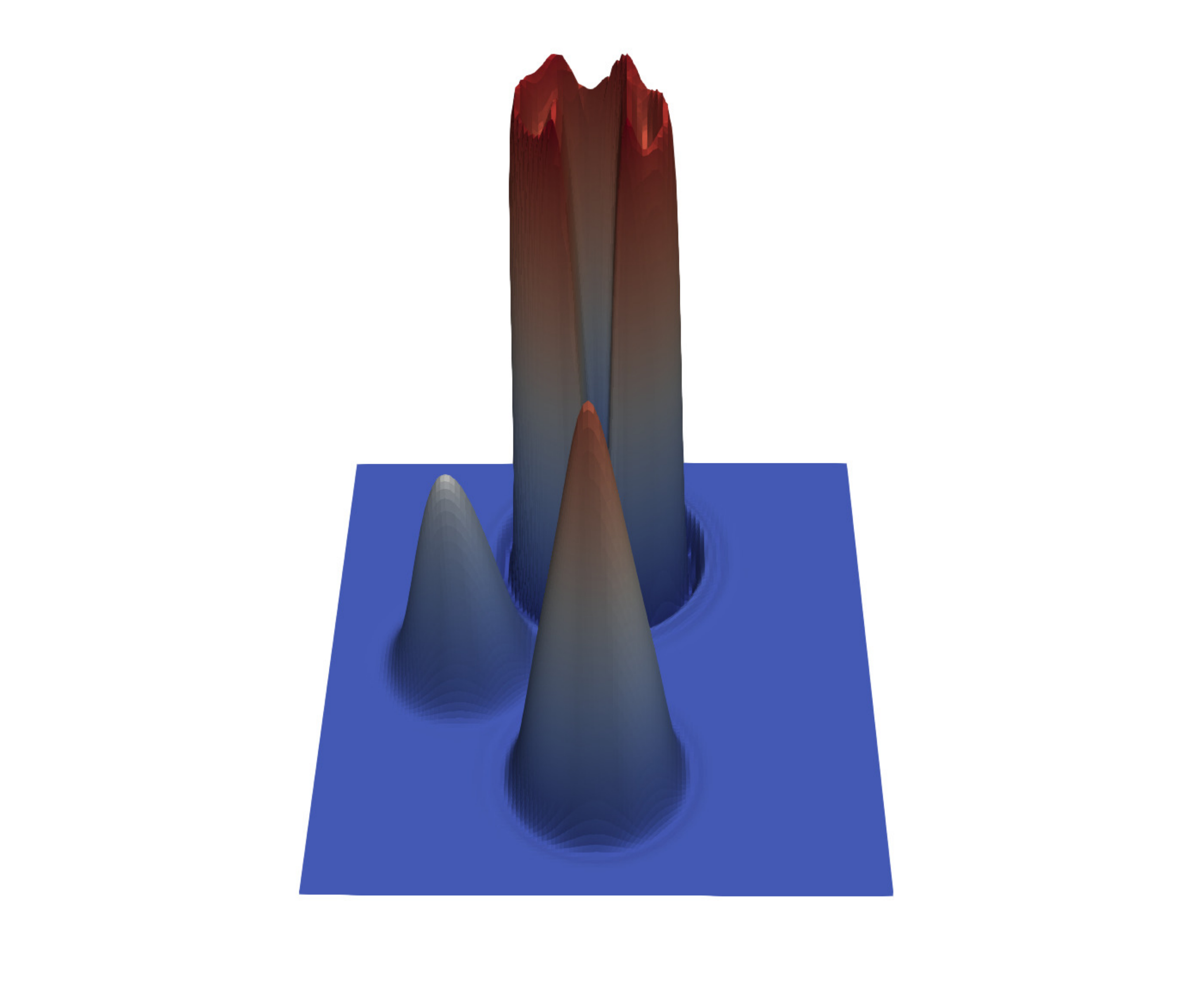}
        \caption{Target solution;\\ $u_h^H \in \left[-0.061,1.114\right]$.}
        \label{fig:adv_ref}
    \end{subfigure}
    \begin{subfigure}{0.495\textwidth}
        \includegraphics[width=\textwidth]{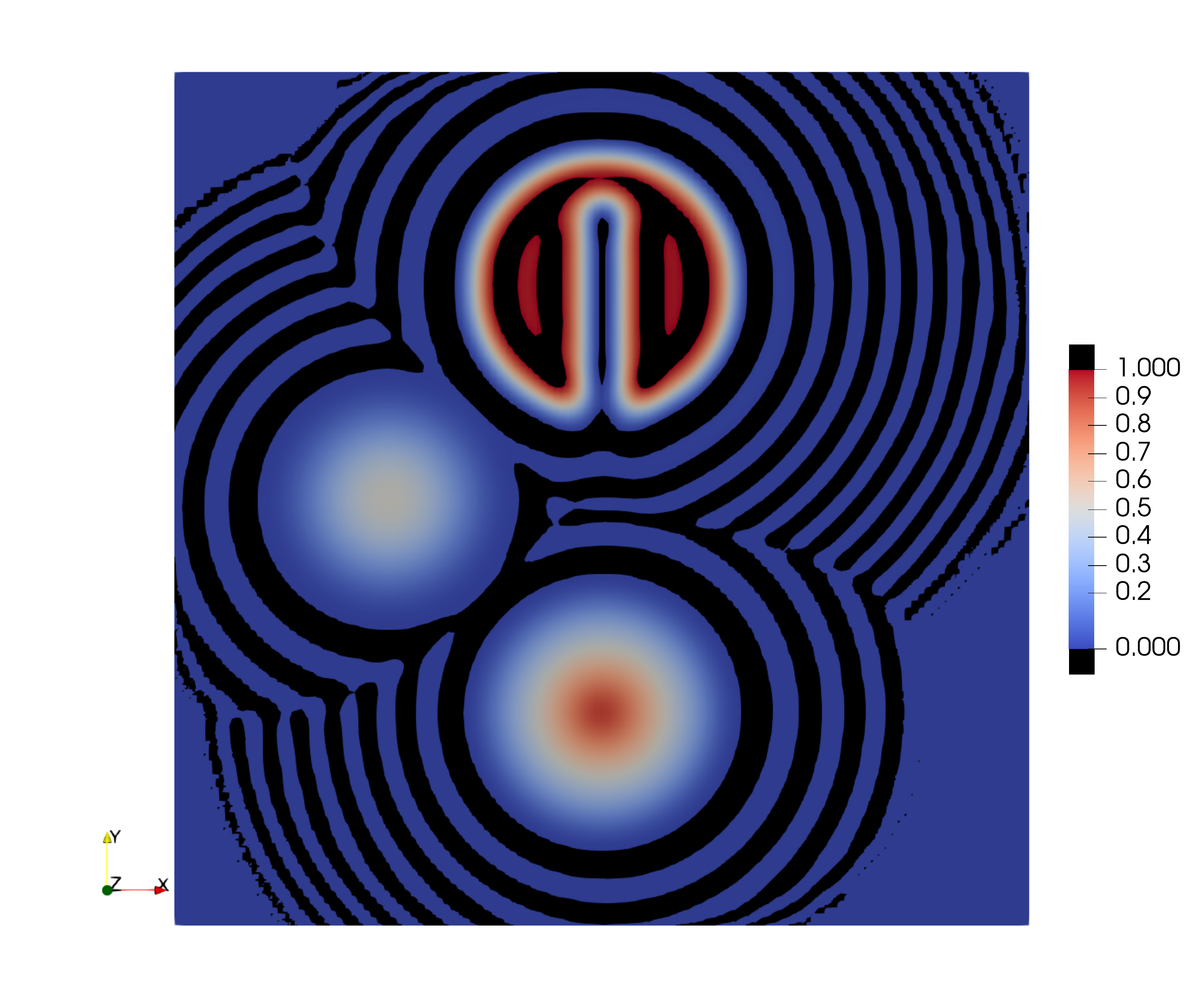}
        \caption{Undershoots/overshoots;\\ $u_h^H \in \left[-0.061,1.114\right]$.}
        \label{fig:adv_ref_vio}
    \end{subfigure}
    \begin{subfigure}{0.495\textwidth}
        \includegraphics[width=\textwidth]{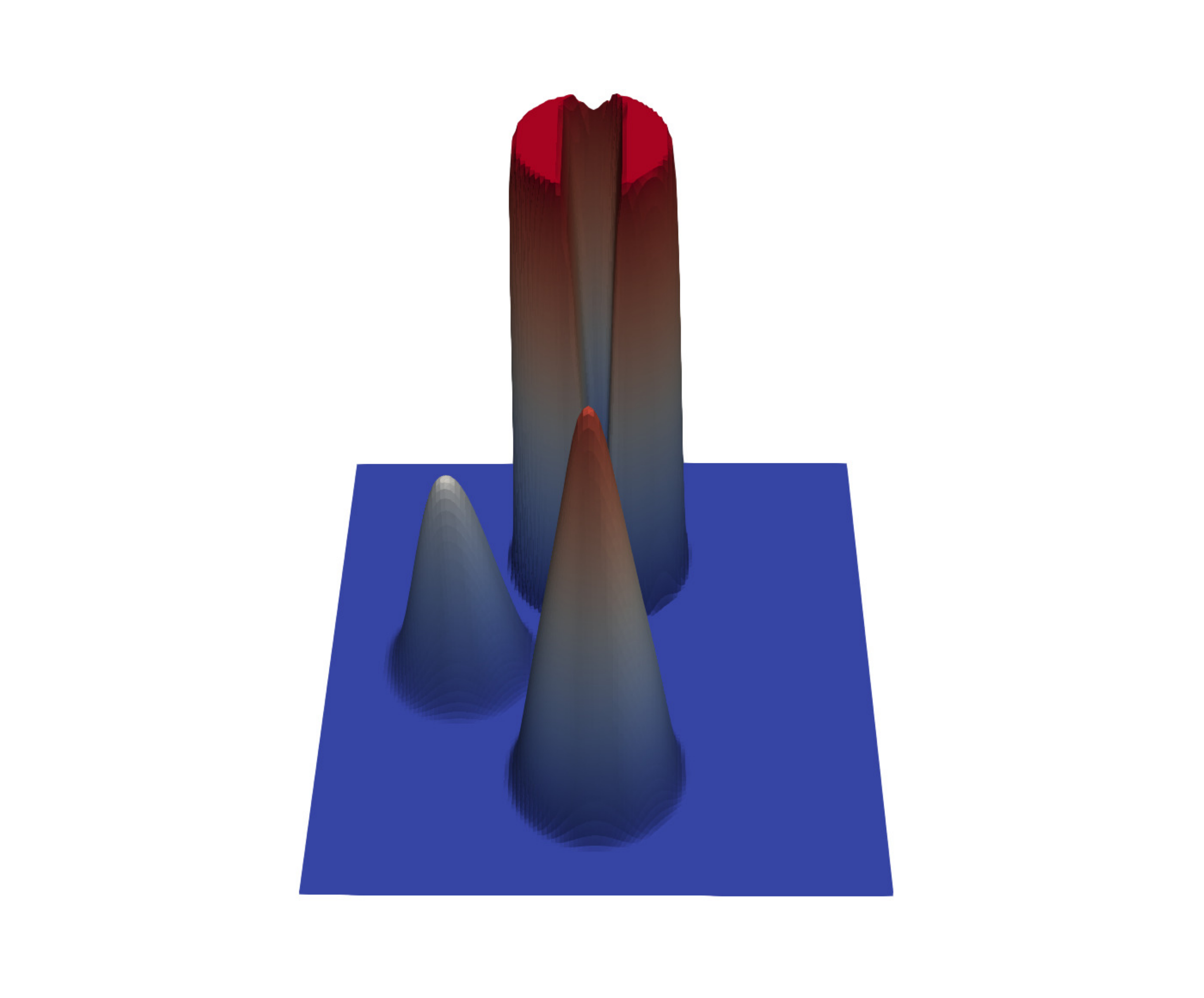}
        \caption{Optimal solution;\\ $u_h^{\text{opt}} \in \left[0,1\right]$.}
        \label{fig:adv_opt}
    \end{subfigure}
    \caption{Approximate solutions of the advection problem.}
\end{figure}

To assess the robustness of our optimal control approach and uncover additional computational speedups, we investigate how ROL's `Lin-Mor\'{e}' method performs for different combinations of \emph{fixed} relative stopping tolerances for MFEM's CG method, used to approximate the application of the solution operator $\tilde S$ in function, gradient, and Hessian evaluations. We label these CG tolerances fstop, gstop, and Hstop, respectively. The quantities of interest are listed in Table~\ref{tab:abb} and their values are presented in Table \ref{tab:scal_tol}. We find that inaccurate linear system solves may drastically reduce the overall computational cost without significantly affecting the accuracy of the result or the behavior of the optimization algorithm. In particular, rows five, six, and seven of Table \ref{tab:scal_tol} indicate that remarkable speedups can be achieved by using inaccurate approximations to Hessian-vector products in the optimization process. Moreover, we note that the mass error is near machine zero, and that the bound constraints are satisfied. Therefore, our optimal control approach enables a mixing of approximate scalable linear solvers, used to meet the accuracy criteria, with machine-accurate projections, used to enforce discrete maximum principles.
\begin{table}[h!]\centering
\begin{tabular}{c|c}
label &  quantity of interest \\
\hline
\#f & average number of function evaluations \\
\#g & average number of gradient evaluations \\
\#p & average number of projection steps \\
$\|\nabla\|$ & average norm of the final criticality measure \\
CPU & total CPU time  \\
mass err & average error in the mass constraint \\
\#V &  average number of multigrid V-cycles\\ & per Hessian-vector product
\end{tabular}
\caption{Meaning of column headings used in Table \ref{tab:scal_tol}.}
\label{tab:abb}
\end{table}

\begin{table}[h!]\centering
\begin{tabular}{ccccccccc}
\hline
fstop/gstop/Hstop && \#f & \#g & \#p & $\|\nabla\|$ & CPU & mass err & \#V \\ 
\hline 
1e-10/1e-10/1e-10 && \hphantom{0}23 & \hphantom{0}23 & 218 & 1.38e-07 & 18h19m & 7.37e-17 & 20.0 \\
1e-08/1e-08/1e-08 && \hphantom{0}23 & \hphantom{0}23 & 215 & 1.39e-07 & 14h47m & 6.83e-17 & 16.0 \\
1e-06/1e-06/1e-06 && \hphantom{0}23 & \hphantom{0}23 & 208 & 1.41e-07 & 11h33m & 7.38e-17 & 12.0 \\
1e-06/1e-06/1e-04 && \hphantom{0}23 & \hphantom{0}23 & 211 & 1.49e-07 & \hphantom{0}8h33m & 6.65e-17 & \hphantom{0}8.2 \\
1e-06/1e-06/1e-02 && \hphantom{0}25 & \hphantom{0}25 & 242 & 1.07e-06 & \hphantom{0}5h54m & 7.49e-17 & \hphantom{0}4.4 \\
1e-06/1e-04/1e-02 && \hphantom{0}24 & \hphantom{0}24 & 217 & 1.38e-06 & \hphantom{0}5h43m & 6.89e-17 & \hphantom{0}4.4 \\
1e-06/1e-06/1e-01 && \hphantom{0}31 & \hphantom{0}30 & 273 & 1.94e-05 & \hphantom{0}4h44m & 7.98e-17 & \hphantom{0}2.4 \\
1e-06/1e-06/0.250 && 136 & 114 & 982 & 8.70e-06 & 10h01m & 1.08e-16 & \hphantom{0}2.0\\
\hline
\end{tabular}
\caption{Solver test for the advection problem.
         Significant speedups, of nearly 4x, are observed when using inaccurate
         approximations of the Hessian-vector product.}
\label{tab:scal_tol}
\end{table}

As the final example in this subsection we investigate the \emph{mass spreading} behavior
of our optimal control approach.
We refer to \cite[Figure 21(d-f)]{peterson2024optimization} for comparisons with
the OBFEM-DD scheme and its variants, which satisfy local bounds and global mass
conservation but do not incorporate any notion of local conservation.
For this study, we use a $96$$\times$$96$ grid of bilinear finite elements
with periodic boundary conditions.
The velocity field is a translation, given by $\mathbf{v}(x,y)= (1,0)^\top$.
We set the final simulation time to $T=1$, which corresponds to a complete cycle
of the initial condition across the periodic boundaries $x=0$ and $x=1$.
Our result in Figure~\ref{fig:mass_spread} satisfies the bound constraints and
conserves the mass globally.
The figure is presented in a $\log_{10}$ scale to emphasize any mass spreading
in the domain, no matter its amount.
Figure~\ref{fig:mass_spread} shows no global mass spreading.
We observe only local mass spreading, which is expected, in form of smearing near
the cylinder, cone and bump.
In contrast, the OBFEM-DD scheme suffers from global mass spreading at the level of
$10^{-6}$ to $10^{-5}$, as shown in~\cite[Figure 21(d)]{peterson2024optimization},
where it is labeled OBT.

\begin{figure}
    \centering
    \includegraphics[width=.5\textwidth]{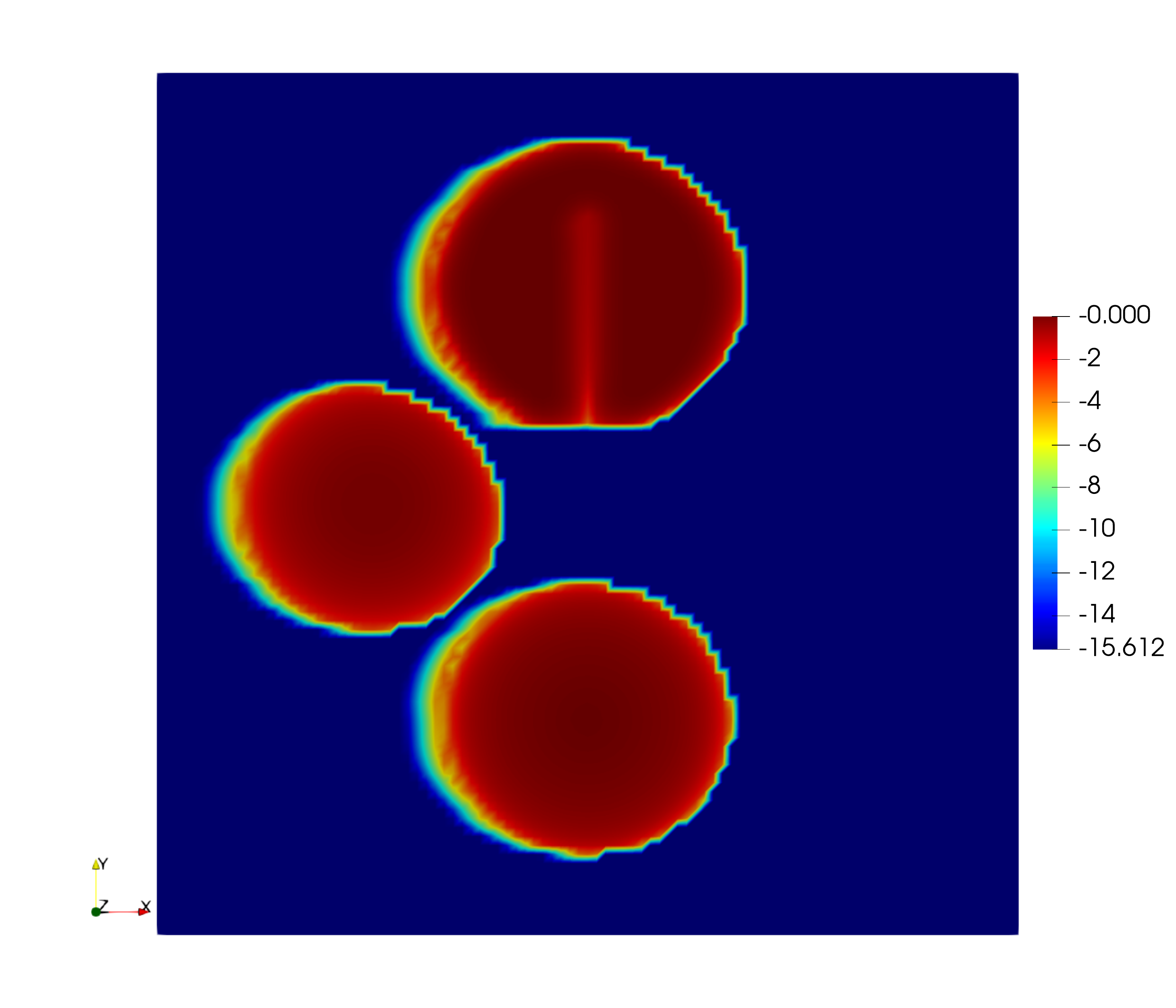}
    \caption{Log plot of the optimal solution $u_h^{\rm opt}$ at final time.}
    \label{fig:mass_spread}
\end{figure}

\subsection{Cahn-Hilliard equation}
To test our optimal control strategy in the context of an important nonlinear conservation law, we consider the Cahn-Hilliard equation, a fourth-order PDE that models phase separation processes in binary mixtures of incompressible fluids. The mixed form of the Cahn-Hilliard equation reads \cite{frank2020bound}
\begin{alignat*}{4}
    \pd{u}{t} - \nabla\cdot\left(\magenta{\eta}(u) \nabla\mu\right) &= 0 & & \text{ in } & & \hphantom{\partial}\Omega & & \times\left(0,T\right],\\
    \mu - \psi'(u) + \kappa \Delta u &= 0 & & \text{ in } & & \hphantom{\partial}\Omega & & \times\left(0,T\right],\\
    \nabla u \cdot \mathbf{n} &= 0 & & \text{ on } & & \partial\Omega & & \times\left(0,T\right],\\
    -\magenta{\eta}(u) \nabla \mu \cdot \mathbf{n} &= 0 & & \text{ on } & & \partial\Omega & & \times\left(0,T\right].
\end{alignat*}
The conserved quantity $u$ represents the difference between the mass fractions of the two components. The range of physically admissible values for $u$ is $\left[-1,1\right]$. The chemical potential $\mu$ is defined as the derivative of the free energy
$$ E(u) := \int_\Omega \left(\frac{\kappa}{2} \left | \nabla u \right |^2 + \psi(u)\right) \dx,$$
where $\kappa$ \magenta{determines} the width of the diffuse interface between the two fluids. In numerical experiments, $\kappa$ is bounded below by the mesh size $h$. The mobility of the separation process is controlled by $\magenta{\eta}(u)= \left(\max\left(0,1-u^2\right)\right)^k$ for $k\in\left\lbrace 0,1,2\right\rbrace$ in the cases of constant, degenerate and biquadratic-degenerate mobility. In our numerical studies, we use constant mobility and the polynomial potential
$$\psi\left(u\right)=\frac14\left(u^2-1\right)^2.$$

We discretize a weak form of the Cahn-Hilliard system in space using the continuous 
Galer\-kin method. At the semi-discrete level, the evolution of the
finite element approximations $u_h\approx u$
and $\mu_h\approx\mu$ is governed by
\begin{align*}
    \int_\Omega \varphi_i \pd{u_h}{t} \dx + \int_{\Omega} \magenta{\eta}(u_h) \nabla\varphi_i \cdot \nabla \mu_h \dx  &= 0,\qquad i=1,\ldots,N,\\
    \int_\Omega \varphi_i \mu_h \dx - \int_\Omega \varphi_i \psi'(u_h) \dx - \int_{\Omega}\kappa \nabla \varphi_i \cdot \nabla u_h \dx &= 0,\qquad i=1,\ldots,N.
\end{align*}
Following Shen and Yang \cite{shen2010numerical}, we discretize in time using a semi-implicit time stepping scheme. The resulting nonlinear system is given by
\begin{align} \label{CH:consistent}
    \begin{bmatrix} M_C & \Delta tA(\magenta{\eta}(u_h^n)) \\ -A(\kappa) & M_C \end{bmatrix} \begin{bmatrix} u^{n+1} \\ \mu^{n+1} \end{bmatrix} = \begin{bmatrix} M_C u^n \\ b(u_h^n)\end{bmatrix}.
\end{align}
The entries of $A(\magenta{\eta})=(a_{ij}(\magenta{\eta}))_{i,j=1}^N$ and $b(u)=(b_i(u))_{i=1}^N$ are defined as follows:
$$
a_{ij}(\magenta{\eta})=\int_{\Omega}\magenta{\eta}\nabla\varphi_i \cdot\nabla\varphi_j\dx,
\qquad b_i(u)=\int_{\Omega}\varphi_i\psi'(u)\dx.
$$

An advantage of this fully discrete scheme compared to a fully explicit one is the less severe time step restriction. In the case of constant mobility, the system matrix is constant and needs to be assembled just once. The vector $b(u_h^n)$ needs to be assembled once per time step. A fully implicit treatment would require solving a nonlinear system and incur a much higher computational cost.

We notice that the unconstrained
discrete problem is equivalent to
\begin{align} \label{CH:obfem}
  \begin{bmatrix} M_L & \Delta tA(\magenta{\eta}(u_h^n)) \\ -A(\kappa) & M_C \end{bmatrix} \begin{bmatrix} u^{n+1} \\ \mu^{n+1} \end{bmatrix} = \begin{bmatrix}
    M_L u^n+ \Delta t (M_L-M_C)\dot u^{n+1} \\ b(u_h^n)\end{bmatrix}
\end{align}
with $\dot u^{n+1}=\frac{u^{n+1} - u^n}{\Delta t}$. Treating this choice as a target,
we can now enforce preservation of global bounds $u^{\min}=-1$ and
$u^{\max}=1$ by optimizing $\dot u^{n+1}$.

\subsubsection{Merging droplets in 2D} \label{test:ch2}
We begin with the two-dimensional merging droplets problem considered in \cite{liu2019numerical}. The initial condition for the solution of the Cahn-Hilliard equation is given by
$$ u_0(x,y) = \begin{cases} 1 &\text{if} \left(\left(\frac{1}{8} \leq x \leq \frac{1}{2} \right)\wedge \left(\frac{1}{8} \leq y \leq \frac{1}{2}\right)\right) \\ &\vee\left(\left(\frac{1}{2} \leq x \leq \frac{7}{8} \right)\wedge \left(\frac{1}{2} \leq y \leq \frac{7}{8}\right)\right), \\ -1 &\text{otherwise}.\end{cases} $$
This initial configuration is shown in Figure~\ref{fig:md_init}. Two squares are filled with fluid $a$, which is surrounded by fluid $b$. We use constant mobility, $k=0$, and the interface width $\kappa = 2^{-10}$. The computational domain is discretized using a uniform $64\times 64$ grid. The numerical solution is initialized by the lumped-mass $L^2$ projection of $u_0$. We use the time step $\Delta t = 10^{-4}$ and stop at the final time $T=0.1$, by which the two squares merge and a diffuse interface forms between the fluids. This result is consistent with the findings of Liu et al. \cite{liu2019numerical}.

\begin{figure}
	\begin{subfigure}{0.495\textwidth}
	    \includegraphics[width=\textwidth]{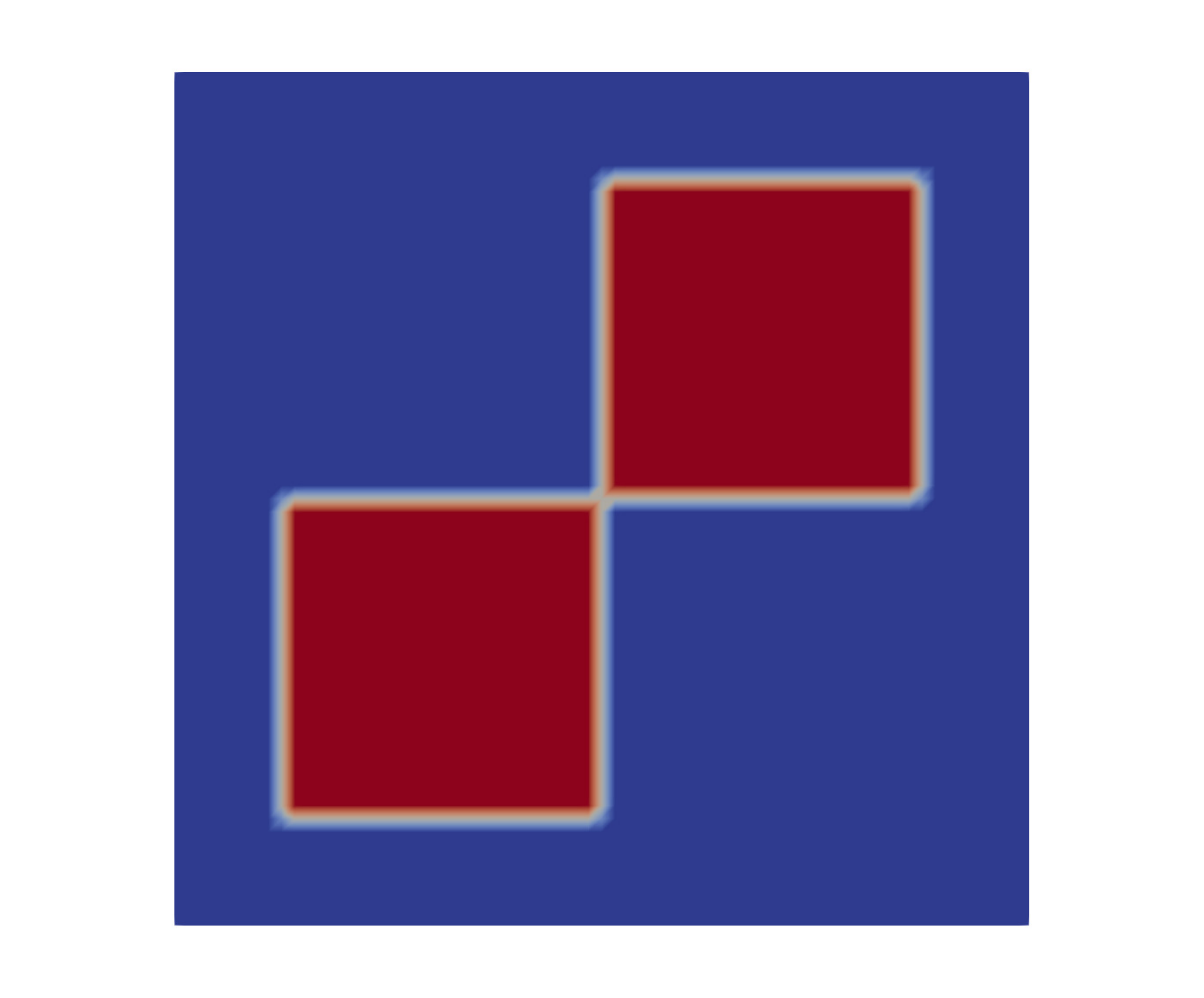}
    	\caption{Projected initial condition;\\ $u_h^0 \in \left[-1,1 \right]$.}
    	\label{fig:md_init}
	\end{subfigure}
	\hfill
    \begin{subfigure}{0.495\textwidth}
        \includegraphics[width=\textwidth]{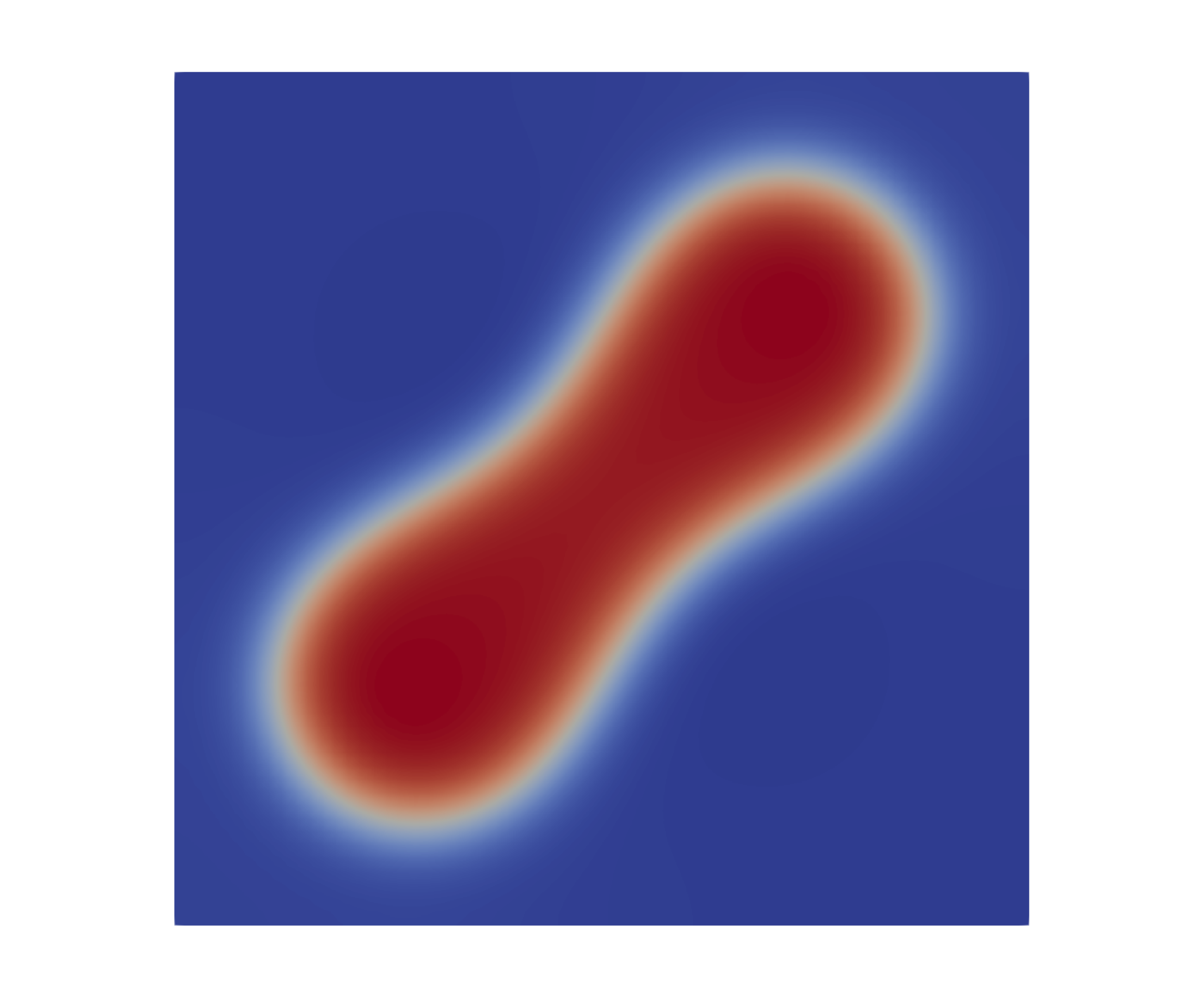}
        \caption{Target solution;\\ $u_h^H \in \left[-0.996,1.024 \right]$.}
        \label{fig:md_ref}
    \end{subfigure}
    \vskip\baselineskip
    \begin{subfigure}{0.495\textwidth}
        \includegraphics[width=\textwidth]{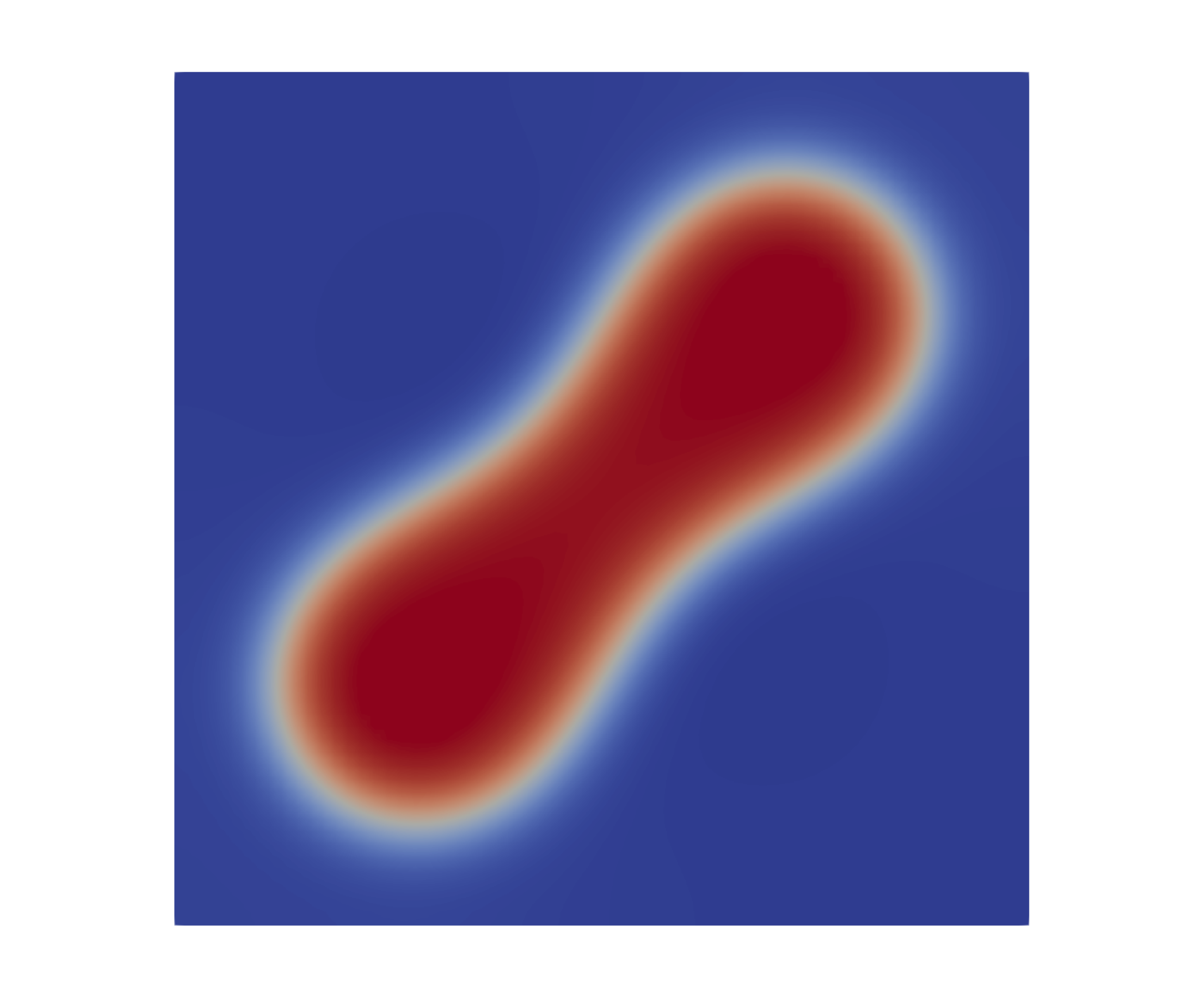}
        \caption{Optimal control solution;\\ $u_h^{\text{opt}} \in \left[-0.996,1\right]$.}
        \label{fig:md_opt}
    \end{subfigure}
    \hfill
    \begin{subfigure}{0.495\textwidth}
        \includegraphics[width=\textwidth]{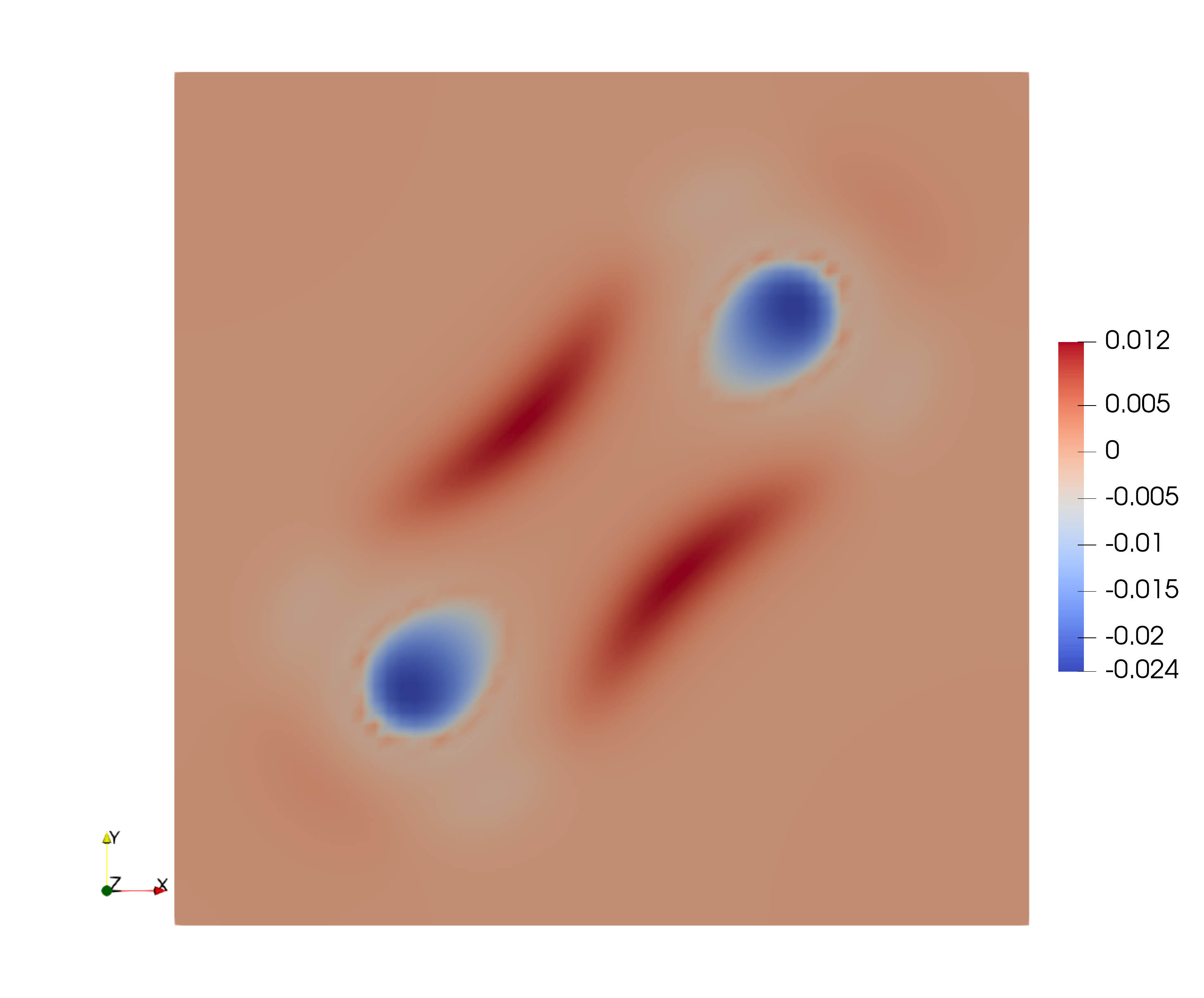}
        \caption{Difference between the solutions;\\\centering $u_h^{\text{opt}} - u_h^H \in \left[-0.024, 0.012 \right]$.}
        \label{fig:md_dif}
    \end{subfigure}
    \caption{Approximate solutions of the 2D merging droplets problem.}
\end{figure}

The numerical solution $u_h^H$ that we show in Figure \ref{fig:md_ref} was
obtained with the unconstrained target scheme \eqref{CH:consistent}. It violates the upper bound $u^{\max}=1$ but the global minimum $-0.996$ stays in the admissible range. Figure \ref{fig:md_opt} shows the bound-preserving optimal control solution $u_h^{\rm opt}$. Interestingly, it has the same acceptable global minimum as $u_h^H$. The difference between the two approximations is plotted in Figure~\ref{fig:md_dif}. It is particularly large in the blue subdomains, in which $u_h^H$ has overshoots, while $u_h^{\rm opt}$ is bounded above by $1$. 

\red{In order to examine the performance of the proposed algorithm, we conduct a scaling analysis motivated by the convergence tests in~\cite{liu2019numerical}.
For $i \in \left\{3,\dots, 7\right\}$, the step size pairs $(h_i,\Delta t_i)$ are tested, with $h_i = 2^{-i}$ and $\Delta t_i = 0.1 \cdot 2^{-2i}$.
The results are presented in Table~\ref{tab:scal_conv_ch}.
Therein the numbers of function evaluations, gradient evaluations and projections increase slightly with mesh refinement.
The final criticality measures, $\|\nabla\|$, increase due to our use of a relative stopping tolerance and the increasing initial criticality measures as the mesh is refined.
In the scaling benchmark from Section~\ref{test:rm} the iteration numbers decline for finer meshes due to a better sampling of the discontinuities.
The bound violations of the consistent projection become less severe, yielding faster solutions.
In contrast, in this example the violations \emph{remain} on finer meshes, resulting in iteration numbers that are inversely proportional to the mesh size~$h_i$.
We note that the numbers of optimization variables are inversely proportional to~$h_i^2$.
Hence, the algorithm performs well under mesh refinement.}

\begin{table}\centering
  \begin{tabular}{cccccccc}
  \hline
  index $i$ & mesh size & \#f & \#g & \#p & mass err & $\|\nabla\|$ \\ 
  \hline
  3 & 1 / 8   & \hphantom{1}4.75 & \hphantom{1}4.75 & \hphantom{1}33.19 & 2.36e-16 & 1.75e-06  \\
  4 & 1 / 16  & \hphantom{1}4.74 & \hphantom{1}4.74 & \hphantom{1}31.84 & 3.26e-17 & 6.58e-06  \\
  5 & 1 / 32  & \hphantom{1}5.59 & \hphantom{1}5.59 & \hphantom{1}37.46 & 4.83e-17 & 6.40e-05  \\
  6 & 1 / 64  & \hphantom{1}9.78 & \hphantom{1}9.78 & \hphantom{1}73.40 & 7.67e-17 & 4.07e-04  \\
  7 & 1 / 128 &            18.35 &            18.35 &            140.49 & 8.42e-17 & 2.91e-03  \\
  \hline
  \end{tabular}
  \caption{Impact of mesh refinement on the scalability of optimal control for example \ref{test:ch2}.}
  \label{tab:scal_conv_ch}
\end{table}

\subsubsection{Merging droplets in 3D} \label{test:ch3}
In the next example, we solve a three-dimensional merging droplets problem. Following Liu et al.
\cite{liu2023}, we choose the initial condition 
\begin{align*}
u_0(\mathbf x) = \max \left\{-1, \tanh\left(50\frac{0.25-\left\| \mathbf x - \mathbf a_0\right\|}{\sqrt2}\right),\tanh\left(50\frac{0.25-\left\| \mathbf x - \mathbf a_1\right\|}{\sqrt2}\right)\right.&,  \\
	   \left. \tanh\left(50\frac{0.16-\left\| \mathbf x - \mathbf a_2\right\|}{\sqrt2}\right),\tanh\left(50\frac{0.16-\left\|\mathbf x -\mathbf a_3\right\|}{\sqrt2}\right) \right\}&,
\end{align*}
where $\mathbf a_0 = \left[0.35,0.35,0.35 \right]^\top$, $\mathbf a_1 = \left[0.65,0.65,0.65 \right]^\top$, $\mathbf a_2 = \left[0.75,0.25,0.25 \right]^\top$, and $\mathbf a_3 = \left[0.25,0.75,0.75 \right]^\top$ are the centers of four droplets.

We compute numerical solutions using constant mobility and $\kappa = \frac{1}{50^2}$. The final time is $T=1.5s$. To visualize the regions where $u_h > 0$, we use Paraview with a clip filter. Figure \ref{fig:md3_init} shows the lumped-mass $L^2$ projection of the initial condition. The global maximum of the target solution shown in Figure \ref{fig:md3_ref} exceeds the upper bound by $0.028$. The optimal control solution, which we present in Figure \ref{fig:md3_opt}, does preserve the global bounds. To better visualize the difference between the two solutions , we cut the 3D plot in Figure~\ref{fig:md3_dif} using a hyperplane orthogonal to $\mathbf{n}=\left(-1,-1,1\right)^\top$. The global minimum $-0.028$ of $u_h^{\text{opt}} - u_h^H$ has the same magnitude as the overshoot in $u_h^H$. The dark purple disk in the middle visualizes the violations of the upper bound. The surrounding yellow circle is the transition zone that receives the mass from the overshoot region. 
\begin{figure}
    \begin{subfigure}{0.495\textwidth}
        \includegraphics[width=\textwidth]{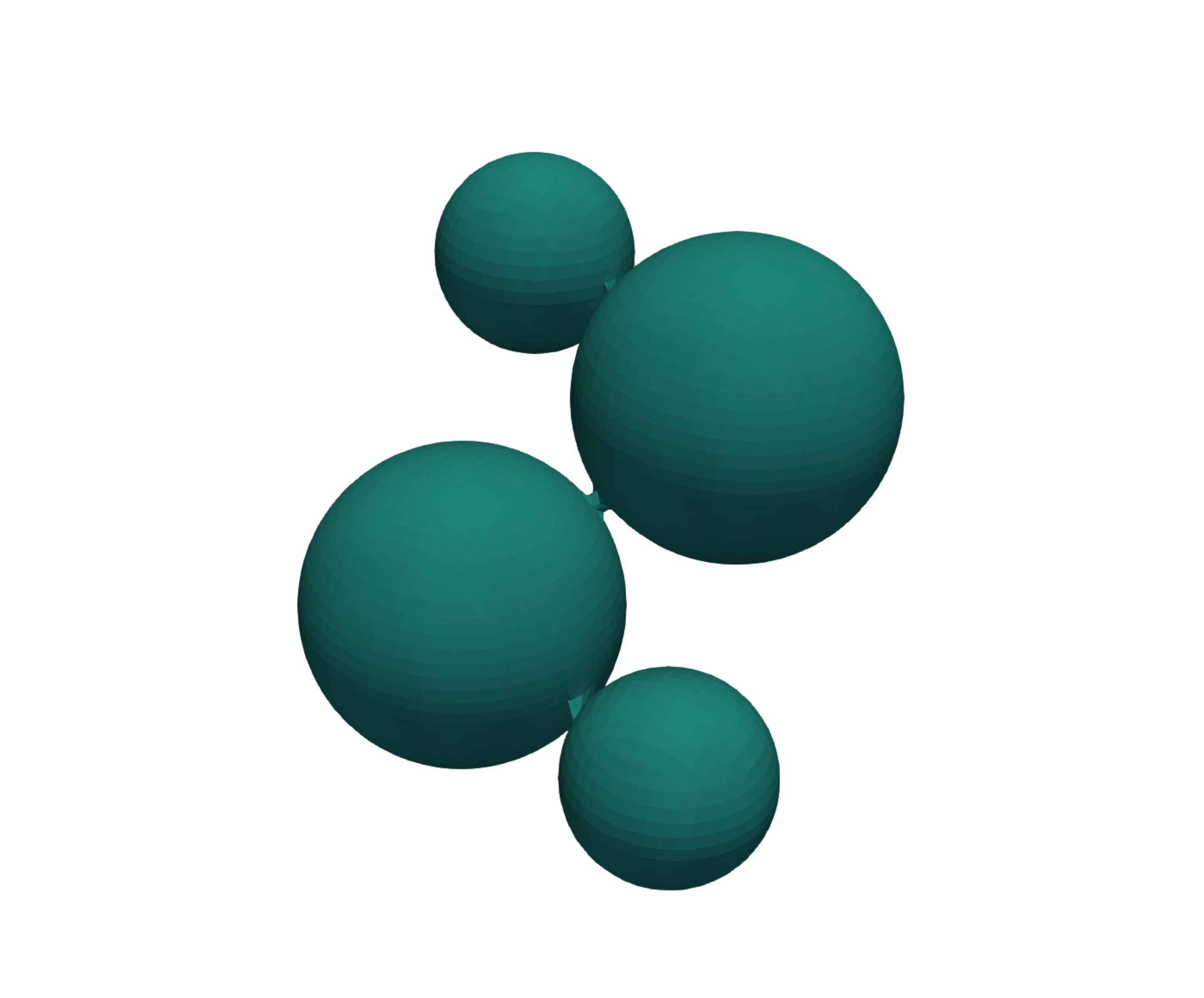}
        \caption{Projected initial condition;\\ $u_h^0 \in \left[-1,1 \right]$.}
        \label{fig:md3_init}
    \end{subfigure}
    \hfill
    \begin{subfigure}{0.495\textwidth}
        \includegraphics[width=\textwidth]{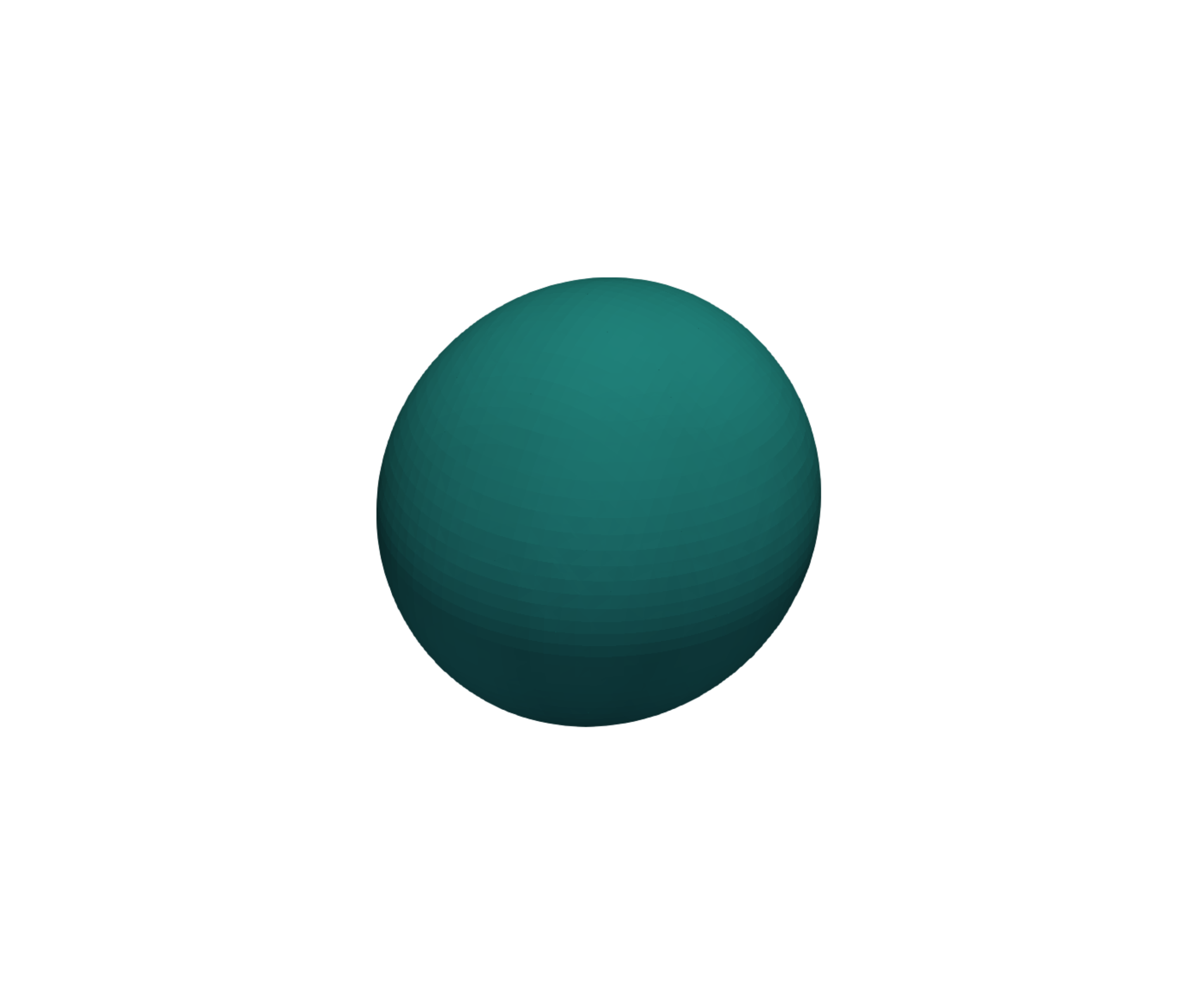}
        \caption{Target solution;\\ $u_h^H \in \left[-0.971,1.028\right]$.}
        \label{fig:md3_ref}
    \end{subfigure}
    \vskip\baselineskip
    \begin{subfigure}{0.495\textwidth}
        \includegraphics[width=\textwidth]{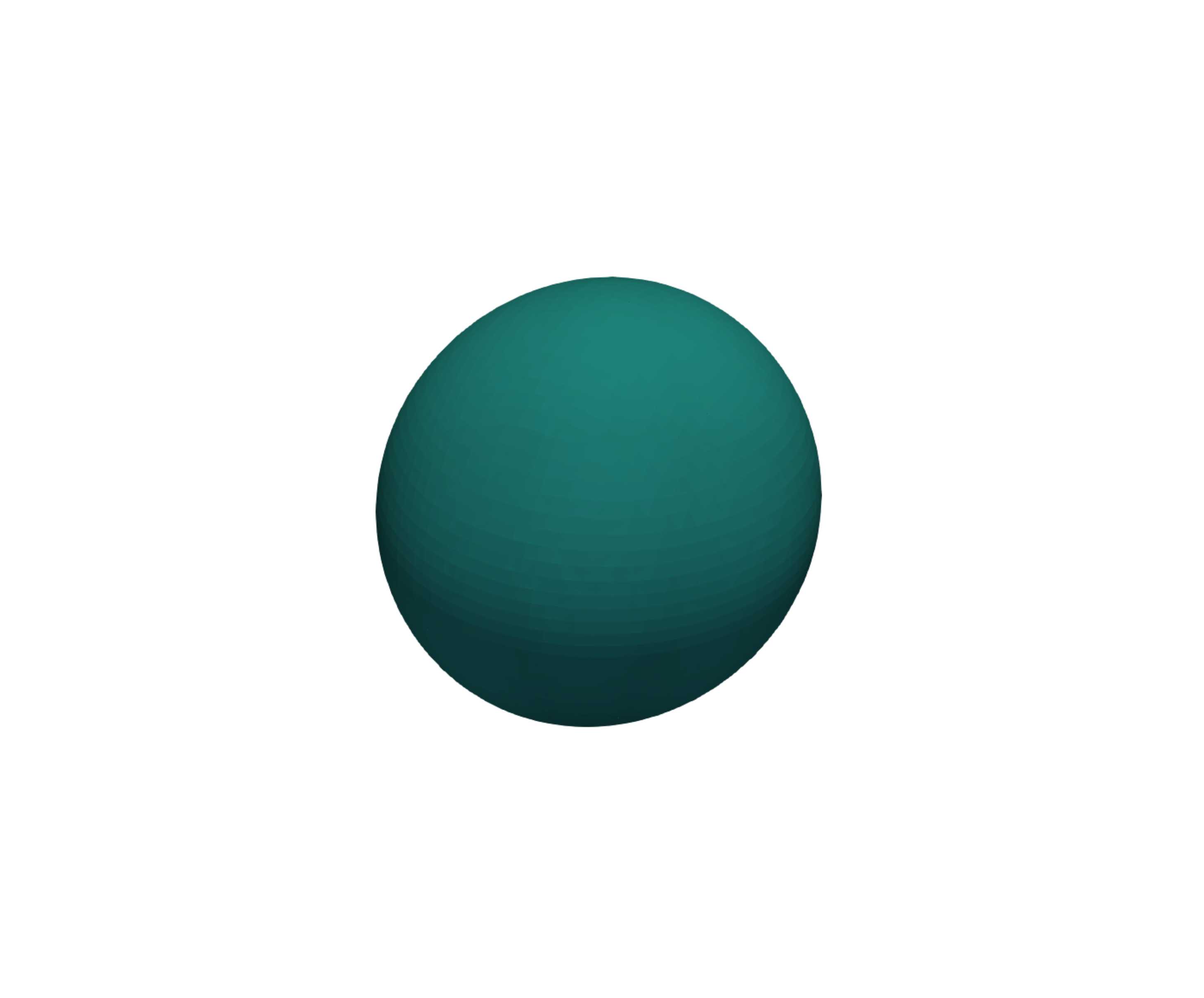}
        \caption{Optimal control solution;\\ $u^{\text{opt}} \in \left[-0.971,1\right]$.}
        \label{fig:md3_opt}
    \end{subfigure}
    \hfill
    \begin{subfigure}{0.495\textwidth}
        \includegraphics[width=\textwidth]{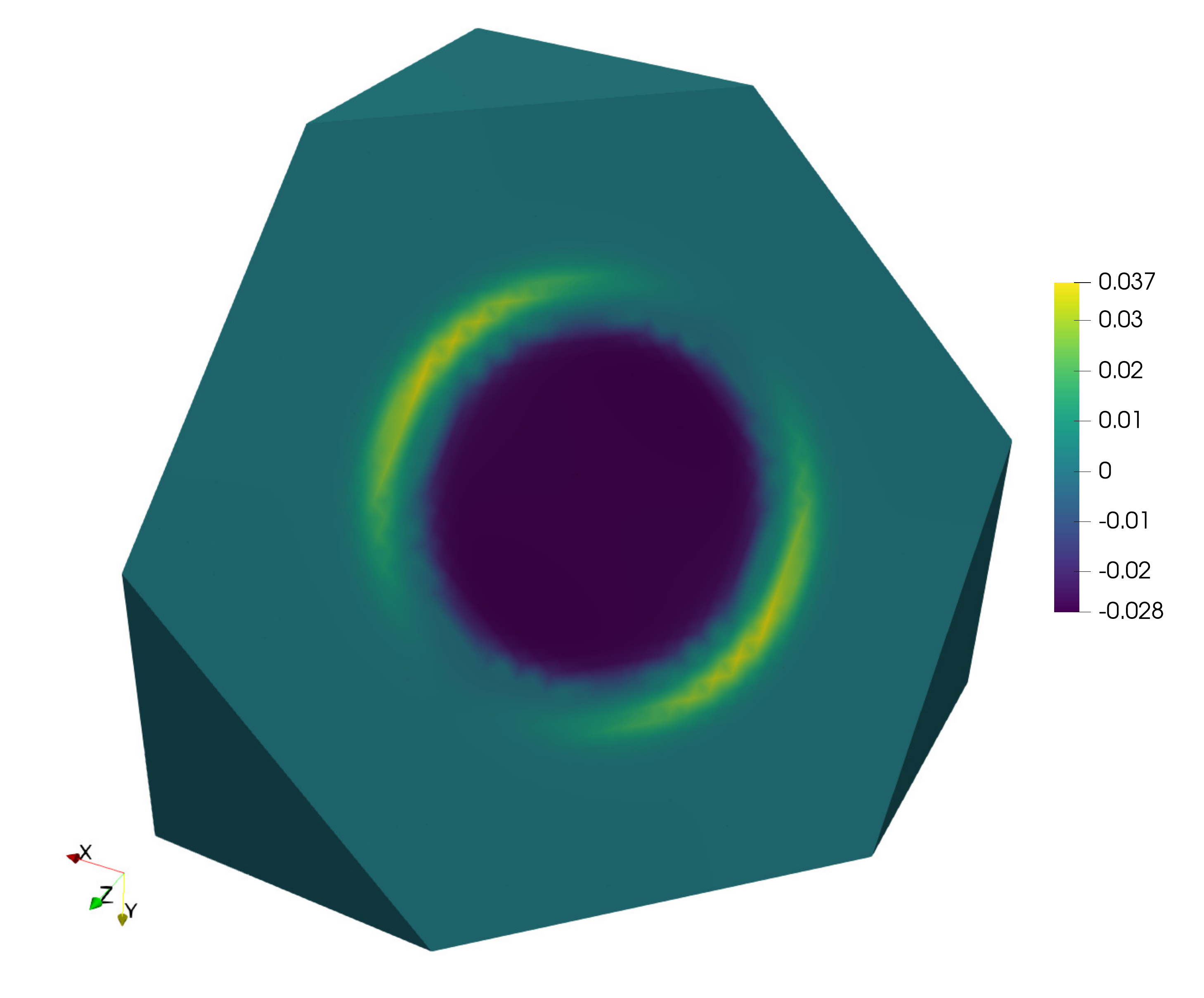}
        \caption{Difference between the solutions;\\\centering $u_h^{\text{opt}} - u_h^H \in \left[-0.028, 0.037 \right]$.}
        \label{fig:md3_dif}
    \end{subfigure}
    \caption{Approximate solutions of the 3D merging droplets problem.}
\end{figure}

\red{Based on the scaling test from Section \ref{test:ch2}, we conduct a similar test with the step size pairs $(h_i, \Delta t_i)$ for $i \in \left\{3,\dots,6 \right\}$ and $\kappa=2^{-10}$.
The results are presented in Table \ref{tab:scal_conv_ch2}, where a slight increase in the function evaluation, gradient evaluation and projection numbers is observed.
We note that the rate of increase is sublinear with respect to $1/h_i$, even though the numbers of optimization variables increase cubically, i.e., as~$1/h_i^3$. These results demonstrate nearly scalable performance of our approach.}

\begin{table}\centering
  \begin{tabular}{cccccccc}
  \hline
  index $i$ & mesh size & \#f & \#g & \#p & mass err & $\|\nabla\|$ \\ 
  \hline
  3 & 1 / 8   & 4.03 & 4.03 & 27.16 & 3.43e-17 & 2.71e-06  \\
  4 & 1 / 16  & 4.38 & 4.38 & 28.82 & 2.37e-17 & 2.04e-05  \\
  5 & 1 / 32  & 5.80 & 5.80 & 39.36 & 1.77e-17 & 7.11e-05  \\
  6 & 1 / 64  & 9.71 & 9.71 & 66.79 & 2.16e-16 & 7.46e-04  \\
  \hline
  \end{tabular}
  \caption{Impact of mesh refinement on the scalability of optimal control for example \ref{test:ch3}.}
  \label{tab:scal_conv_ch2}
\end{table}

\subsubsection{Spinodal decomposition}
In the last example, we use the Cahn-Hilliard equation with $\kappa=2^{-10}$
to simulate
spinodal decomposition of two fluids. Our 3D version of the 2D test
problem considered in \cite{frank2020bound} defines the initial condition
\begin{align*}
    u_0\left(x,y,z\right) \in \left\{ -0.99, 0, 0.99 \right\}
\end{align*}
by  choosing from equally distributed random numbers. The lumped-mass $L^2$ projection of $u_0$ is depicted in Figure~\ref{fig:sd_init}. In this 3D experiment, we use a uniform mesh consisting of  $64\times64\times64$ cubes and the time step $\Delta t = 10^{-4}$. The simulation is terminated at the final time $T=0.1$. The target solution shown in Figure~\ref{fig:sd_ref} exhibits undershoots and overshoots, whereas the optimal control solution shown in Figure~\ref{fig:sd_opt} preserves the global bounds. A hyperplane cut of the difference between the two solutions is presented in Figure~\ref{fig:sd_dif}. The plot confirms that this difference is small in regions where $u_h^H$ is bound preserving.
\begin{figure}
    \begin{subfigure}{0.495\textwidth}
        \includegraphics[width=\textwidth]{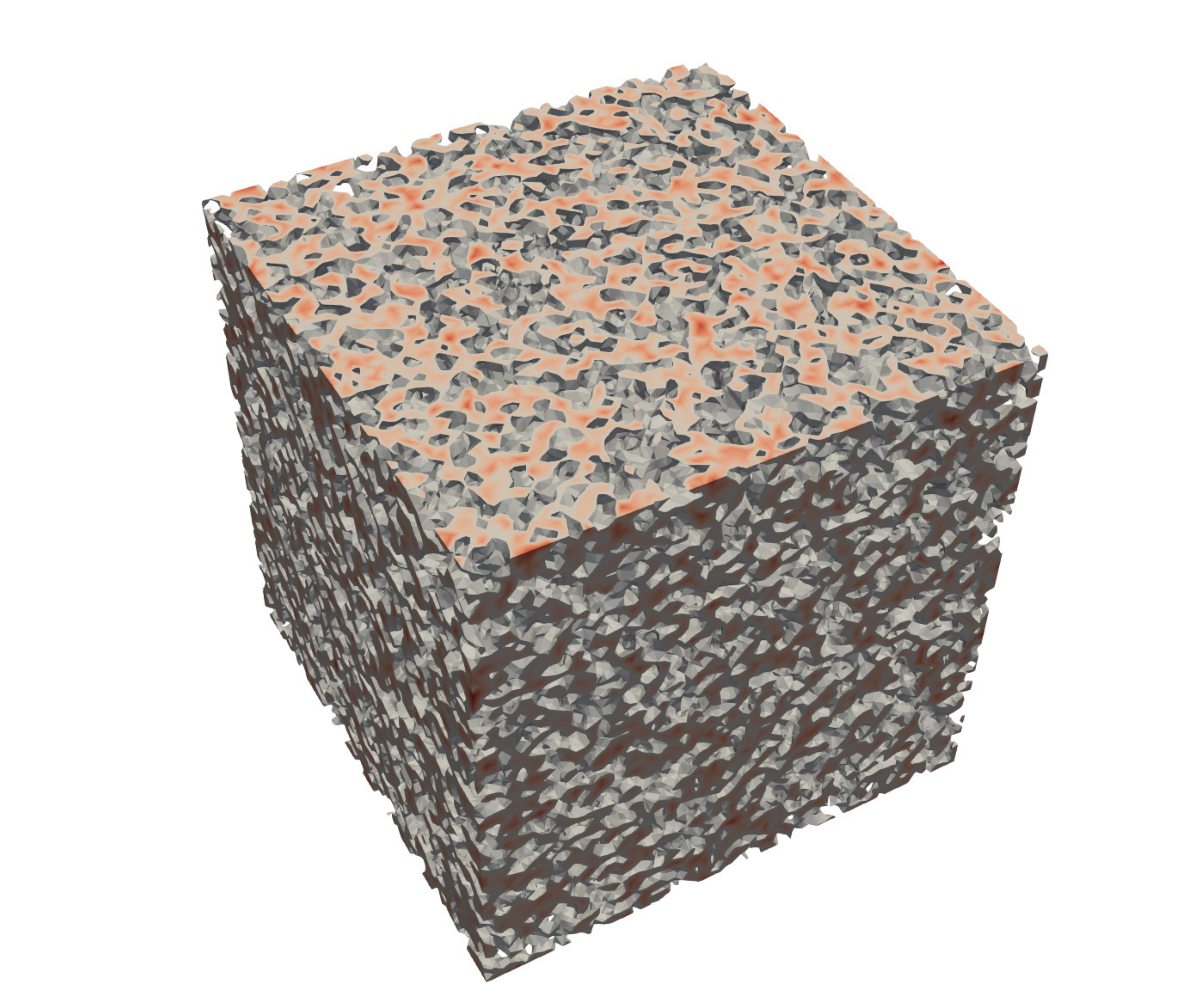}
        \caption{Projected initial condition;\\ $u_h^0 \in \left[-0.838,0.841 \right]$.}
        \label{fig:sd_init}
    \end{subfigure}
    \hfill
    \begin{subfigure}{0.495\textwidth}
        \includegraphics[width=\textwidth]{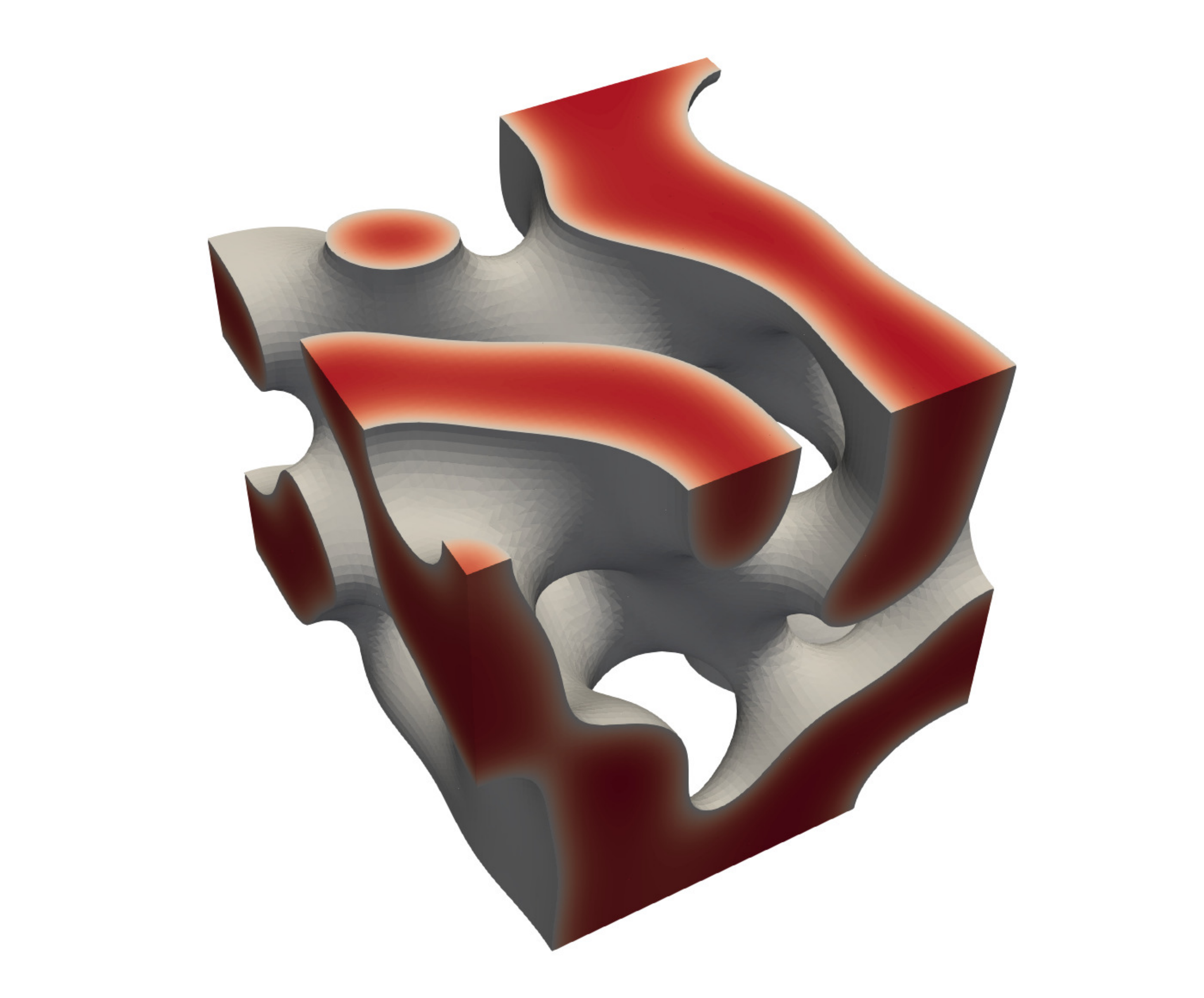}
        \caption{Target solution;\\ $u_h^H \in \left[-1.052,1.028 \right]$.}
        \label{fig:sd_ref}
    \end{subfigure}
    \vskip\baselineskip
    \begin{subfigure}{0.495\textwidth}
        \includegraphics[width=\textwidth]{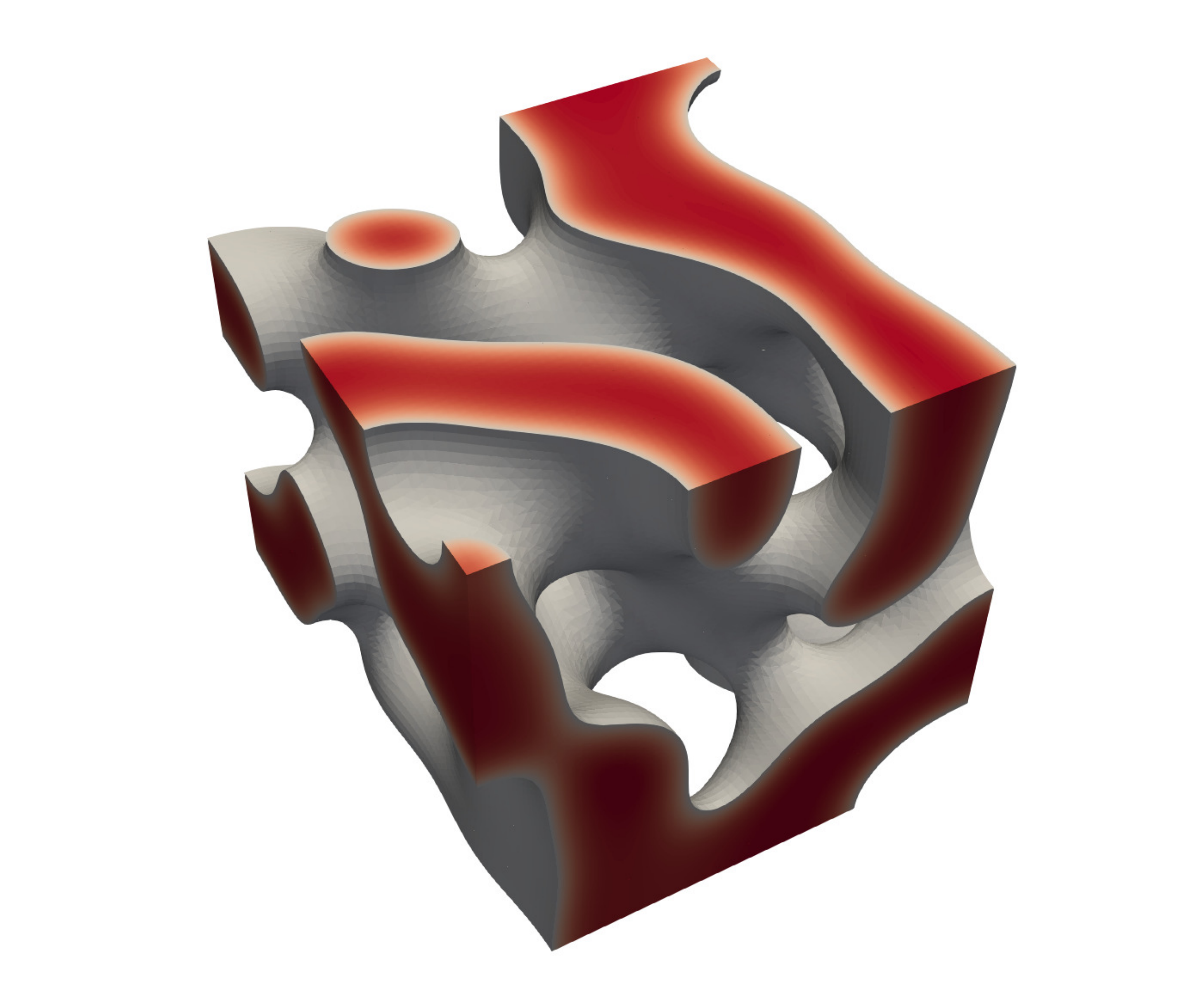}
        \caption{Optimal control solution;\\ $u_h^{\text{opt}} \in \left[-1,1\right]$.}
        \label{fig:sd_opt}
    \end{subfigure}
    \hfill
    \begin{subfigure}{0.495\textwidth}
        \includegraphics[width=\textwidth]{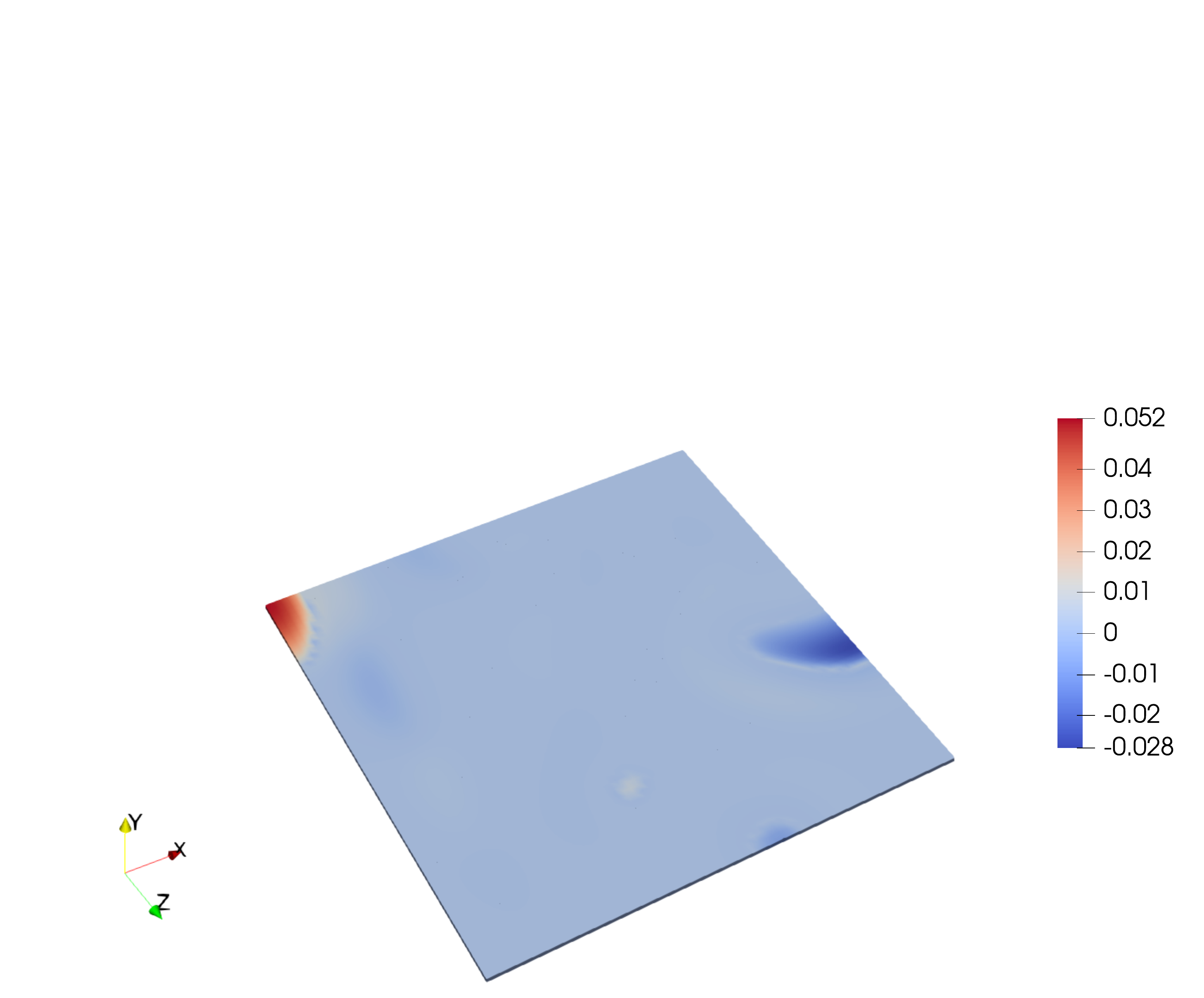}
        \caption{Difference between the solutions;\\\centering $u_h^{\text{opt}} - u_h^H \in \left[-0.028, 0.052 \right]$.}
        \label{fig:sd_dif}
    \end{subfigure}
    \caption{Approximate solutions of the spinodal decomposition problem.}
\end{figure}

\section{Conclusions}
\label{sec:conclusions}
\PBB{In this paper, we presented a new potential-target optimization-based property preserving scheme for scalar conservation laws. The proposed approach improves upon existing OB formulations in two important ways. First, it constrains finite element approximations to satisfy discrete maximum principles while preserving good local conservation properties, thereby avoiding issues such as the mass spreading in OBFEM-DD. Second, by using a special semi-norm and a diagonal scaling operator in the definition of the optimization problem, our formulation achieves  fast convergence of iterative solvers.}
The numerical scheme combines inexact multigrid solvers with very loose stopping tolerances, used to evaluate the objective function and its derivatives, with near-exact projections onto convex sets, used to satisfy the constraints resulting from the discrete maximum principles.
Our numerical studies of representative test problems indicate that the inexact evaluation of objective functions, gradients, and Hessians may also result in remarkable efficiency gains. The combination of superb accuracy, reliability, and computational efficiency makes our optimal control approach an attractive alternative to \PBB{traditional} closed-form limiters.

\section*{Acknowledgments}
The authors are honored by the invitation to submit this contribution to the Special Issue of MCRF dedicated to Arnd Rösch's 60th birthday
in 2025.

The work of Falko Ruppenthal and Dmitri Kuzmin was supported by the
German Research Foundation (DFG) under grant KU 1530/29-1.
The work of Pavel Bochev and Denis Ridzal contributing to this material
was supported by the U.S.\ Department of Energy, Office of Science, Office
of Advanced Scientific Computing Research under Field Work Proposal 23-020467.

Sandia National Laboratories is a multi-mission laboratory managed and operated
by National Technology \& Engineering Solutions of Sandia, LLC (NTESS), a wholly
owned subsidiary of Honeywell International Inc., for the U.S. Department of
Energy’s National Nuclear Security Administration (DOE/NNSA) under contract DE-NA0003525.
This written work is authored by an employee of NTESS. The employee, not NTESS, owns
the right, title and interest in and to the written work and is responsible for
its contents. Any subjective views or opinions that might be expressed in the
written work do not necessarily represent the views of the U.S.\ Government.
The publisher acknowledges that the U.S.\ Government retains a non-exclusive,
paid-up, irrevocable, world-wide license to publish or reproduce the published
form of this written work or allow others to do so, for U.S.\ Government purposes.
The DOE will provide public access to results of federally sponsored research in
accordance with the DOE Public Access Plan.

\bibliographystyle{amsplain}
\bibliography{literature.bib}

\medskip
Received xxxx 20xx; revised xxxx 20xx; early access xxxx 20xx.
\medskip

\end{document}